\documentclass[a4paper,11pt]{amsart}

\usepackage{amsmath,amssymb,amsthm,latexsym,amscd,mathrsfs}
\usepackage{indentfirst}
\usepackage{stmaryrd}
\usepackage{graphicx}
\usepackage{extarrows}
\usepackage{multirow}
\usepackage{latexsym}
\usepackage{amsfonts}
\usepackage{color}
\usepackage{pictexwd,dcpic}
\usepackage{graphicx}
\usepackage{psfrag}
\usepackage{comment}
\usepackage{enumerate}
\usepackage{hyperref}

\newtheorem{theorem}{Theorem}[section]

\newtheorem{corollary}[theorem]{Corollary}

\newtheorem{lemma}[theorem]{Lemma}

\newtheorem{proposition}[theorem]{Proposition}

\newtheorem{remark}[theorem]{Remark}

\theoremstyle{definition}

\newcommand{\Q}{\mathbb{Q}}
\newcommand{\R}{\mathbb{R}}
\newcommand{\Sp}{\mathbb{S}}
\newcommand{\Hi}{\mathbb{H}}
\newcommand{\RP}{\mathbb{R}\mathbb{P}}

\DeclareMathOperator{\otr}{\mathrm{tr}}
\DeclareMathOperator{\oImag}{\mathrm{Image}}
\DeclareMathOperator{\oRic}{\mathrm{Ric}}

\DeclareMathOperator{\oSpec}{\mathrm{Spec}}

\title{Hypersurfaces with Constant Ricci Eigenvalues in Real Space Forms}

\begin{document}

\author[J. Q. Ge]{Jianquan Ge}
\address{School of Mathematical Sciences, Beijing Normal University, Beijing 100875, P. R. China}
\email{jqge@bnu.edu.cn}
\author[Y. Y. Zhao]{Yuyang Zhao$^{*}$}
\address{School of Mathematical Sciences, Beijing Normal University, Beijing 100875, P. R. China}
\email{yyzhao24@mail.bnu.edu.cn}

\subjclass[2010]{53C42, 53C30, 53B25.}
\thanks {$^{*}$ the corresponding author.}
\date{}
\keywords{Einstein, constant Ricci eigenvalues, curvature homogeneous, isoparametric hypersurfaces}
\thanks{J. Q. Ge is partially supported by the NSFC (No. 12571049) and the Fundamental Research Funds for the Central Universities.}

\begin{abstract}
    The classification of curvature homogeneous hypersurfaces in real space forms was established by Tsukada in 1988, with the remaining rank-two cases in $\Sp^4$ and $\Hi^4$ settled by Bryant-Florit-Ziller in 2025. It is obvious that curvature homogeneity implies constant Ricci eigenvalues. In this paper, we prove that for hypersurfaces in real space forms, the converse also holds: a connected hypersurface immersed in real space forms has constant Ricci eigenvalues if and only if it is curvature homogeneous. Hence, hypersurfaces with constant Ricci eigenvalues in real space forms are also classified, which, in particular, generalizes the classification of Einstein hypersurfaces obtained by Lawson for minimal hypersurfaces and by Ryan for general cases in 1969. Moreover, as a byproduct, curvature inhomogeneous Riemannian manifolds with constant Ricci eigenvalues can not be isometrically immersed in any real space form of codimension one. Finally, we show that a hypersurface with constant Ricci eigenvalues is isoparametric if it is a complete hypersurface either in $\Sp^{n+1}$ or nonflat in $\R^{n+1}$ for $n \geq 3$; or if it is not of constant sectional curvature $-1$ in $\Hi^{n+1}$ for $n \geq 5$.
\end{abstract}

\maketitle

\section{Introduction}
A Riemannian manifold $M^n$ is called \emph{curvature homogeneous} if,
for any two points $p, q \in M$, there exists a linear isometry
$J_{pq} \colon T_pM \to T_qM$ satisfying $J_{pq}^{\ast}R_q = R_p$,
where $R$ denotes the Riemann curvature tensor. Every locally homogeneous space is curvature homogeneous, but the converse fails in general. Singer \cite{Singer1960} proved that on a complete and
simply connected Riemannian manifold, if sufficiently many covariant derivatives of $R$ match, then the metric is in fact
homogeneous. In dimension $n=3$, the curvature tensor is determined by the Ricci tensor, so curvature homogeneity is equivalent to the constancy of the Ricci eigenvalues. In higher dimensions $n\geq 4$, curvature homogeneity still  obviously implies that all Ricci eigenvalues are constant, but the converse does not hold in general (see counterexamples in \cite{GZ-Counter}). However, for hypersurfaces in real space forms, we will show that the converse indeed holds. 

For hypersurfaces $M^n$ in real space forms $\Q_c^{n+1}$ of constant sectional curvature $c$, Tsukada \cite{Tsukada1988} established a complete characterization: \emph{a hypersurface $M^n \subset \Q_c^{n+1}$ $(n \geq 3)$ is curvature homogeneous if and only if it is isoparametric, or has constant sectional curvature $c$, or has rank-two shape operator with constant scalar curvature.}
Here, a hypersurface is called \emph{isoparametric} if all its principal curvatures are constant. The study of isoparametric
hypersurfaces in real space forms was initiated by Levi-Civita, Segre and \'{E}lie Cartan in the 1930s and has since become a central subject in submanifold geometry. The classification was already completed by Levi-Civita and Segre in the Euclidean case and by Cartan in the hyperbolic case, whereas in the unit sphere $\Sp^{n+1}$ the problem is far more intricate and was settled only recently, the final case by Quo-Shin Chi \cite{Chi2020} in 2020. For the classifications and developments of isoparametric hypersurfaces, see the excellent monograph of Cecil and Ryan \cite{CR2015} and recent surveys \cite{Chi2018, GQTY2025}.

Among the three cases in Tsukada's characterization, the rank-two case is the most subtle. In Euclidean space, the known examples are cylinders over constant curvature surfaces. For dimensions greater than $3$, Tsukada \cite{Tsukada1988} 
proved the existence of a unique complete four-dimensional rank-two example in $\Hi^5$, while the three-dimensional cases in $\Sp^4$ and $\Hi^4$ were not covered. 
A decisive advance was made recently in 2025 by Bryant, Florit, and Ziller \cite{BFZ2025}, who applied the Gauss parametrization to obtain a complete classification of the remaining rank-two hypersurfaces in these two four-dimensional space forms. We refer to the hypersurfaces arising in their classification as \emph{Bryant-Florit-Ziller rank-two hypersurfaces}; see Theorem~\ref{thm:BFZhyper}.

The present paper is motivated by the following natural question: what are the hypersurfaces $M^n \subset \mathbb{Q}_c^{n+1}$ whose Ricci tensor has constant eigenvalues? As noted above, this property is implied by curvature homogeneity, and for $n = 3$ the two conditions are equivalent. The simplest instance, where all Ricci eigenvalues coincide, is precisely the \emph{Einstein condition}. 
The classification of Einstein hypersurfaces in real space forms was completed independently by Lawson \cite{Lawson1969} and Ryan \cite{Ryan1969} in 1969.
In the spherical case, Lawson \cite{Lawson1969} proved a rigidity theorem for minimal Einstein hypersurfaces: \emph{if $M^n\subset \Sp^{n+1}$ is minimal Einstein, then either it is totally geodesic, or $n=2k$ and it is an open submanifold of the Clifford torus}
$\Sp^{k}(\frac{1}{\sqrt{2}})\times \Sp^{k}(\frac{1}{\sqrt{2}})
\subset \Sp^{n+1}.$
Without the minimality assumption, Ryan \cite{Ryan1969} proved that \emph{an Einstein hypersurface $M^n \subset \Q_c^{n+1}$ with Ricci curvature $\rho$ falls into exactly one of the following three types:
\begin{itemize}
  \item if $\rho > (n-1)c$, then $M$ is totally umbilical of constant sectional curvature $\frac{\rho}{(n-1)}$;
  \item if $\rho = (n-1)c$, then $M$ has constant sectional curvature $c$;
  \item if $\rho < (n-1)c$, then necessarily $c > 0$ and $\rho = (n-2)c$, and $M$ is locally isometric to $\Sp^p\!\left(\sqrt{\tfrac{p-1}{(n-2)c}}\right) \times \Sp^{n-p}\!\left(\sqrt{\tfrac{n-p-1}{(n-2)c}}\right)$ for some integer $1 < p < n-1$.
\end{itemize}}
A key observation is that every Einstein hypersurface in $\Q_c^{n+1}$
is either of constant sectional curvature $c$ or isoparametric with at most two distinct principal curvatures; in particular, it is curvature homogeneous. Since the Einstein condition corresponds to the case of a single distinct Ricci eigenvalue and is therefore fully understood, it remains to investigate the case of $k \geq 2$ distinct constant Ricci eigenvalues. 

In this paper, we prove the following main theorem:
\begin{theorem}\label{thm:main}
    Let $M^n$ be an $n(\geq 3)$-dimensional connected hypersurface immersed in a real space form $\Q_c^{n+1}$  of constant sectional curvature $c$. Then $M^n$ has constant Ricci eigenvalues if and only if it is curvature homogeneous.
\end{theorem}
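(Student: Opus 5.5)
The direction ``curvature homogeneous $\Rightarrow$ constant Ricci eigenvalues'' is immediate: $J_{pq}^\ast R_q=R_p$ gives $J_{pq}^\ast \oRic_q=\oRic_p$, so the Ricci operators at $p$ and $q$ are conjugate by an isometry and have the same spectrum. For the converse we use Tsukada's characterization. It then suffices to show that a hypersurface with constant Ricci eigenvalues is isoparametric, or has constant sectional curvature $c$, or has rank-two shape operator with constant scalar curvature. Let $A$ be the shape operator and $H=\otr A$. The Gauss equation gives
\[
\oRic=(n-1)c\,I+HA-A^2 .
\]
So $\oRic$ is diagonalized in a principal frame, and each Ricci eigenvalue is $\rho_i=(n-1)c+\lambda_i(H-\lambda_i)$, where the $\lambda_i$ are the principal curvatures. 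The scalar curvature $n(n-1)c+H^2-|A|^2$ is constant.

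The core step works on the open dense set where the multiplicities of the $\lambda_i$ are locally constant, so that the $\lambda_i$ are smooth there. For each $i$ the equation $\lambda_i^2-H\lambda_i+(\rho_i-(n-1)c)=0$ has constant coefficients apart from $H$. Hence $\lambda_i$ is one of the two roots $\tfrac12\bigl(H\pm\sqrt{H^2-4(\rho_i-(n-1)c)}\bigr)$, and all principal curvatures are functions of the single function $H$. Two roots of the same quadratic, $\lambda$ and $H-\lambda$, have the same $\rho$. Summing $\lambda_i=\tfrac12(H+\epsilon_i\sqrt{H^2-4a_i})$ over $i$ gives the constraint
\[
H=\tfrac n2 H+\tfrac12\sum_i\epsilon_i\sqrt{H^2-4a_i}.
\]
If $H$ is locally nonconstant, this is an identity in the variable $H$ on an interval. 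Analyticity of $\sqrt{\cdot}$ and comparison of branch points (the distinct $a_i$ give independent square roots) then force strong cancellation. Pairs with the same $a_i$ and opposite signs must cancel, and the remaining coefficient of $H$, namely $(n-2)/2+\tfrac12\sum_{\text{unpaired}}\epsilon_i$, must be zero with no surviving roots. I expect the only solutions to be shapes in which at most two principal curvatures are nonzero, i.e.\ rank $\le 2$ (with $a_i=0$, giving $\lambda\in\{0,H\}$ type cancellations), or $A$ of constant-curvature-$c$ type. Otherwise $H$ is locally constant, so every $\lambda_i$ is constant and $M$ is isoparametric there.

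In the rank-two situation, constant scalar curvature follows at once from the constant Ricci eigenvalues. If $\operatorname{rank}A\le1$ everywhere on the piece, the Gauss equation shows the sectional curvature is $c$.

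The main obstacle is the globalization and the careful case analysis of the cancellation identity. On one hand, we must make the algebraic claim rigorous: the $\rho_i$ come with multiplicities, and we must rule out configurations such as $a_i=H^2/4$ occurring only on a thin set. On the other hand, Tsukada's criterion is stated for the whole connected hypersurface, while our argument is pointwise on an open dense set $U$. To glue the pieces, note first that the multiset of Ricci eigenvalues is globally constant. Each of the three types (isoparametric, sectional curvature $c$, rank two with constant scalar curvature) is then detected by the Ricci spectrum together with the rank of $A$; for instance, constant sectional curvature $c$ corresponds to $\oRic\equiv(n-1)c$ with rank $A\le1$. Therefore the type cannot change across components of $U$, and by continuity of $A$ it extends to all of $M$.

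Once this is settled, Tsukada's theorem, together with Bryant--Florit--Ziller for $n=3$ (where the equivalence is anyway automatic), gives curvature homogeneity.
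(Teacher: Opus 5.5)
You have the same strategy as the paper. You express each principal curvature as a root of $x^2-Hx+a_j=0$ (your normalization: $H=\otr A$, $a_j=\rho_j-(n-1)c$) and sum to get an identity in $H$ alone. Either $H$ is constant, or the identity holds on an interval and forces a special shape of $A$; Tsukada then applies. The easy direction is fine. Your branch-point comparison is a valid substitute for the paper's expansion at $h\to\pm\infty$ with a Vandermonde system.

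\textbf{First gap: the central classification is only \emph{expected}.} To prove it you must group signs by Ricci eigenspace, using the signatures $\sigma_j$ (number of $+$ branches minus number of $-$ branches among the $m_j$ curvatures in the $\rho_j$-eigenspace E_j), because curvatures in one eigenspace may lie on different branches. The identity on an interval then gives $\sigma_j=0$ whenever $a_j\neq0$, and $|\sigma_{j_0}|=n-2$ for the unique $j_0$ with $a_{j_0}=0$, i.e.\ $\rho_{j_0}=(n-1)c$. Since $|\sigma_{j_0}|\le m_{j_0}$ and $\sigma_{j_0}\equiv m_{j_0}\pmod 2$, we get $m_{j_0}\in\{n-2,n\}$.
\begin{itemize}
\item If $m_{j_0}=n$, every $\lambda_i\in\{0,H\}$ and they sum to $H$, so $\oSpec(A)=\{H,0,\ldots,0\}$.
\item If $m_{j_0}=n-2$, the remaining two dimensions form one eigenspace with $\sigma=0$. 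They carry the two roots $\mu,\nu$ with $\mu+\nu=H$ and $\mu\nu=a_{j_1}\neq0$. The other $n-2$ curvatures lie in $\{0,H\}$ and sum to $0$, hence vanish.
\end{itemize}
Your worry about thin degenerate sets is settled by Lemma~\ref{lem:degeneracy}: $H^2-4a_j$ can vanish only for the largest Ricci eigenvalue.

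\textbf{Second gap: the gluing step is not correct as written.} That each type is detected by the Ricci spectrum together with the rank of $A$ does not make the type constant on $M$. The rank of $A$ is only lower semicontinuous, and it is exactly what might change between components of your dense set $U$. For instance, your argument does not exclude two components with
\begin{itemize}
\item $\oSpec(A)=\{\mu,\nu,0,\ldots,0\}$ on one, and
\item $\oSpec(A)=\{h\ (\times p),\ 0\ (\times(n-2-p)),\ \tfrac{1-p}{2}h\ (\times 2)\}$ with $2\le p\le n-2$, $h\neq0$, isoparametric, on the other.
\end{itemize}
These have the same Ricci spectrum when $\mu\nu=\tfrac{1-p^2}{4}h^2$, but different curvature tensors. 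Likewise, components on which $H$ is constant could a priori carry different constant values of $H$ and different sign patterns.

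\textbf{What the paper has and you lack is a global invariant.} For $j<k$ one has $H^2-4a_j>0$ on all of $M$. So $\sigma_j=\bigl(2\otr(A|_{E_j})-m_jH\bigr)/\sqrt{H^2-4a_j}$ is a continuous integer-valued function, hence constant on the whole connected $M$ (Lemma~\ref{lem:delta}), not just on components of $U$. Only $\sigma_k$ can jump, and only across the closed set where $H^2=4a_k$; there $H$ takes just the values $\pm2\sqrt{a_k}$. This prevents mixing: in the configuration above, $\sigma_1=0$ on the rank-two piece but $\sigma_1=\pm2$ on the other. Combined with the discreteness of the zero sets of $F_\delta$ (Proposition~\ref{prop:F!0}), it also forces $H$ to be globally constant when no component has $F_\sigma\equiv0$.
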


An immediate corollary is the following nonexistence rigidity of isometric immersions (see examples in \cite{GZ-Counter}).
\begin{corollary}
   Let $M^n$ be an $n(\geq 4)$-dimensional Riemannian manifold with constant Ricci eigenvalues but not curvature homogeneous. Then there is no isometric immersion of $M^n$ in any real space form $\Q_c^{n+1}$.
\end{corollary}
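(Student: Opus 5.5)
The plan is to argue by contradiction and reduce everything to Theorem~\ref{thm:main}, using the fact that both properties in question are \emph{intrinsic}: they depend only on the metric $g$ of $M^n$, and not on any immersion. Throughout I take $M^n$ to be connected, as in Theorem~\ref{thm:main} and in the examples of \cite{GZ-Counter}. If $M^n$ were disconnected, curvature homogeneity would have to compare points lying in different components. The theorem, applied componentwise, would not by itself control that comparison, so connectedness is the natural standing assumption here.

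Suppose, to the contrary, that $f\colon M^n\to\Q_c^{n+1}$ is an isometric immersion for some $c$. Then $f(M)$ is locally a connected immersed hypersurface whose induced metric is exactly $g$. Hence its Ricci tensor is the Ricci tensor of $g$, and its Ricci eigenvalues are constant by hypothesis. Since $n\ge 4\ge 3$, Theorem~\ref{thm:main} applies and shows that $(M^n,f^*\langle\cdot,\cdot\rangle)=(M^n,g)$ is curvature homogeneous. That is, for any $p,q\in M$ there is a linear isometry $J_{pq}\colon T_pM\to T_qM$ with $J_{pq}^*R_q=R_p$, where $R$ is the Riemann tensor of $g$. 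This contradicts the assumption that $M^n$ is not curvature homogeneous, so no such immersion exists, for any value of $c$.

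There is essentially no obstacle beyond invoking Theorem~\ref{thm:main}. The only point to state carefully is that the curvature tensor appearing in the definition of curvature homogeneity is that of the induced metric, which coincides with $g$ because $f$ is isometric. The restriction $n\ge 4$ is harmless and in fact necessary for the hypothesis to be non-vacuous. For $n=3$ the Ricci tensor determines $R$, so constant Ricci eigenvalues already force curvature homogeneity, and no manifold satisfies the hypothesis.
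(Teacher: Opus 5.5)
Your argument is correct and is exactly the paper's: the corollary is presented as an immediate consequence (the contrapositive) of Theorem~\ref{thm:main}, since an isometric immersion would make $(M^n,g)$ a connected hypersurface in $\Q_c^{n+1}$ with constant Ricci eigenvalues, hence curvature homogeneous. Your explicit connectedness assumption matches the hypothesis of Theorem~\ref{thm:main}, which the paper leaves implicit in the corollary's statement.
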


Applying the results in \cite{Tsukada1988} and \cite{BFZ2025},  by Theorem \ref{thm:main}, hypersurfaces in real space forms with constant Ricci eigenvalues can be classified specifically in the following Theorems \ref{thmR}-\ref{thm:BFZhyper}.
\begin{theorem}\cite{Tsukada1988}\label{thmR}
    Let $M^n$ be an $n(\geq 3)$-dimensional connected curvature homogeneous space and let $f$ be an isometric immersion of $M^n$ into $\R^{n+1}$. Then one of the following holds:
    \begin{enumerate}
        \item $M^n$ is a flat manifold.
        \item $M^n$ is locally isometric to $\Sp^r
        _{\kappa}\times \R^{n-r}$, $3\leq r\leq n$, for a round sphere of constant curvature $\kappa>0$, and $f$ is locally congruent to the standard isometric embedding $\tilde f$ of $\Sp^r_{\kappa}\times \R^{n-r}$ into $\R^{n+1}$.
        \item $M^n$ is locally isometric to $M^2_{\kappa}\times \R^{n-2}$, $\kappa\neq 0$, and $f$ is locally congruent to the product immersion $f_1\times f_2$, where $M^2_{\kappa}$ denotes a surface of constant curvature $\kappa\ (\neq 0)$ and $f_1$ is an isometric immersion of $M^2_{\kappa}$ into $\R^3$ while $f_2$ is the identity map of $\R^{n-2}$ onto $\R^{n-2}$.
    \end{enumerate}
\end{theorem}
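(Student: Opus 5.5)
We would reduce Theorem~\ref{thmR} to Tsukada's characterization quoted in the introduction: a curvature homogeneous hypersurface $M^n\subset\R^{n+1}$ ($n\geq 3$) is isoparametric, or has constant sectional curvature $0$, or has rank-two shape operator with constant scalar curvature. The plan is to treat these three cases one by one and show that each lands in (1), (2) or (3).

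\emph{Flat case.} If $M^n$ has constant sectional curvature $c=0$, this is alternative (1) directly.

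\emph{Isoparametric case.} We invoke the Levi-Civita--Segre classification: an isoparametric hypersurface in $\R^{n+1}$ has at most two distinct principal curvatures, and if there are two, one of them is $0$. Hence $f$ is locally an open part of one of the following:
\begin{itemize}
\item a hyperplane, which is flat and gives (1);
\item a round sphere $\Sp^n_\kappa$, which is case $r=n$ of (2);
\item a standard cylinder $\Sp^r_\kappa\times\R^{n-r}$ with $1\leq r\leq n-1$.
\end{itemize}
For $r=1$ the cylinder is flat, giving (1). For $r=2$ it is a special instance of (3), with $f_1$ the round sphere in $\R^3$. For $3\leq r\leq n$ it is (2), with $f$ congruent to the standard embedding. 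Rigidity of hypersurfaces of rank $\geq 3$ (Beez--Killing) gives the "locally congruent" clause.

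\emph{Rank-two case.} This is the heart of the proof. Let $D=\oImag A$ (rank two) and let $\Delta=\ker A$, the relative nullity distribution of rank $n-2$. By the Gauss equation, the sectional curvature of $D$ is $\kappa=\lambda_1\lambda_2=\det(A|_D)$, and all other curvature components vanish. So the scalar curvature equals $2\kappa$, and $\kappa$ is a nonzero constant (if $\kappa=0$ we are in the flat case). By the standard Codazzi argument, $\Delta$ is integrable with totally geodesic leaves, and $f$ maps these leaves into affine $(n-2)$-planes.

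We then introduce the splitting tensor $C_T X=-(\nabla_X T)_D$ for $T\in\Delta$ and $X\in D$. Codazzi gives $(\nabla_T A)|_D=A\,C_T$, so $A C_T$ is symmetric. Differentiating $\det(A|_D)=\kappa$ along $T$ gives $\otr(A^{-1}\nabla_T A)=\otr C_T=0$. The key step is to prove $C\equiv 0$; then $D$ and $\Delta$ are both parallel. De Rham's local splitting then gives $M\cong M^2_\kappa\times\R^{n-2}$, and Moore's lemma (the second fundamental form vanishes on $\Delta$) shows $f$ is locally the product $f_1\times\mathrm{id}$, which is (3).

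To get $C\equiv0$, we would combine the algebraic constraints with the Riccati-type equation $\nabla_T C_S=C_S C_T+C_{\nabla_T S}$ along the leaves. A symmetric-times-$A$, trace-free $2\times2$ matrix with $\det A\neq0$ satisfies $C_T^2=-\det(C_T)\,I$. Next we apply the curvature identity $R(X,Y)T=0$ for $X,Y\in D$, which holds because $\Delta$ lies in the curvature nullity. Expanding it through $\nabla C$ and using $\kappa\neq0$ constant, we aim to show that $C$ is zero.

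\textbf{Main obstacle.} This last step is where we expect the real difficulty. If the local identities alone do not force $C=0$, the fallback is the Riccati equation $T(C_T)=C_T^2$ along geodesics of the leaves. Together with $C_T^2=-\det(C_T)\,I$, this reduces to a scalar ODE whose solutions are either identically zero or blow up in finite time. Ruling out the blow-up locally, using constancy of $\kappa$ together with the $D$-directional Codazzi equations, is the crux of Tsukada's argument.
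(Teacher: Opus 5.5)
\textbf{No proof in the paper; the rank-two case is a genuine gap.} The paper does not prove this statement; it quotes it from Tsukada, so there is no in-paper argument to compare with. On its own merits, your flat and isoparametric cases are fine. This grants the characterization you start from, which is itself Tsukada's main theorem. The gap is the rank-two case, the only substantive part, which you do not prove; and your proposed mechanism cannot work.

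\textbf{Why the blow-up fallback fails.} You already have $\otr C_T=0$ for all $T\in\Delta$. The traced Riccati equation $S(\otr C_T)=\otr(C_TC_S)+\otr C_{\nabla_S T}$ then gives $\otr(C_SC_T)=0$ for all $S,T\in\Delta$. Hence $\det C_T=0$ and $C_T^2=0$. So your ``scalar ODE'' along a leaf geodesic is just $\nabla_T C_T=0$: $C$ is parallel along the leaves and nothing blows up.

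\textbf{What the leafwise identities do and do not give.} Combined with the symmetry of $AC_T$, $C_T$ is a nilpotent endomorphism of $D$, self-adjoint for the form $\langle A\,\cdot,\cdot\rangle|_D$.
\begin{itemize}
\item When $\kappa>0$ that form is definite, and you get $C\equiv0$ at once.
\item When $\kappa<0$ the form is Lorentzian. Then $C_T=\phi(T)N$ with $N\neq0$, $N^2=0$ and $\ker N=\oImag N=\ell$ an asymptotic line of $A|_D$ satisfies every identity you list.
\end{itemize}
The identity $R(X,Y)T=0$ cannot exclude this configuration either. It is intrinsic, and intrinsically there are non-product metrics with the curvature tensor of $M^2_\kappa\times\R^{n-2}$.

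\textbf{The missing idea: an extrinsic argument along $\ell$, a direction in $D$, not along the leaves.} On the open set where $C\neq0$, one shows from the Codazzi equations the following.
\begin{itemize}
\item $\Delta\oplus\ell$ has totally geodesic leaves, which $f$ maps into affine $(n-1)$-planes.
\item The $\ell$-curves are straight lines, along which the unit normal rotates in a fixed $2$-plane.
\item Arclength $s$ on such a line is affine in the tangent of the rotation angle (Chasles' theorem for a non-developable ruling).
\end{itemize}
This forces $\kappa=-b^2/(b^2+(s-s_0)^2)^2$ along each such line, with $b\neq0$ constant on the line. That contradicts the constancy of $\kappa$.

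\textbf{Equivalent route via the Gauss parametrization.} Use the Dajczer--Gromoll Gauss parametrization $\Psi(x,w)=\gamma g+g_*\nabla\gamma+w$. The condition $\det(\mathrm{Hess}\,\gamma+\gamma I-A_w)=1/\kappa$ for all normal $w$ forces
\begin{itemize}
\item $A_w=\phi(w)\,\xi\otimes\xi$,
\item $\mathrm{Hess}\,\gamma(\eta,\eta)+\gamma=0$,
\item $|\mathrm{Hess}\,\gamma(\xi,\eta)|=(-\kappa)^{-1/2}$, with $\eta\perp\xi$.
\end{itemize}
But along the $\eta$-geodesics of the round metric on the Gauss image, $\mathrm{Hess}\,\gamma(\xi,\eta)$ is a constant multiple of $\sec^2 t$ for a suitable arclength parameter $t$, so it cannot be constant. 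Without some argument of this kind, alternative (3) is not established.
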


\begin{theorem}\cite{Tsukada1988}\label{thmS}
    Let $M^n$ be an $n(\geq 4)$-dimensional connected curvature homogeneous space and let $f$ be an isometric immersion of $M^n$ into $\Sp^{n+1}$. Then either $M^n$ has constant sectional curvature $1$, or $f$ has constant principal curvatures.
\end{theorem}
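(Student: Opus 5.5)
The plan is to read everything off the Gauss equation and split by the rank of the shape operator $A$. For a hypersurface in $\Sp^{n+1}$ the Gauss equation reads $R=R_1+\tfrac12 A\wedge A$, where $R_1$ is the curvature tensor of constant curvature $1$. Hence curvature homogeneity means that the tensor $\tfrac12 A\wedge A$ at any point $p$ is carried by a linear isometry to the same tensor at any other point $q$. In particular the rank of $A$ is constant on $M$, since it is read off from the rank of $R-R_1$ viewed as an operator on $\Lambda^2$. I split into three cases: $\operatorname{rank}A\le 1$, $\operatorname{rank}A\ge 3$, and $\operatorname{rank}A=2$.

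\emph{Case $\operatorname{rank}A\le 1$.} Here $A\wedge A=0$, so $R=R_1$ and $M$ has constant sectional curvature $1$.

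\emph{Case $\operatorname{rank}A\ge 3$.} I use the classical algebraic rigidity behind the Beez--Killing theorem: a symmetric endomorphism of rank at least $3$ is determined up to sign by $A\wedge A$. Thus if $J_{pq}^{\ast}(A_q\wedge A_q)=A_p\wedge A_p$, then $J_{pq}^{-1}A_qJ_{pq}=\pm A_p$. Consequently the set of principal curvatures at $q$ equals the set at $p$, possibly with all signs reversed. The characteristic polynomial of $A$ is a continuous function of the point, so on each component it either stays fixed or is reversed. The sign can be fixed continuously on a connected $M$, for instance by choosing the unit normal so that the largest $|\lambda_i|$ carries a fixed sign, or by noting that the set where the polynomial takes a given one of the two values is open and closed. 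So the principal curvatures are constant.

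\emph{Case $\operatorname{rank}A=2$.} This is the main obstacle, and I would rule it out when $n\ge4$. Write the nonzero principal curvatures as $\lambda,\mu$. Curvature homogeneity forces $\lambda\mu=K$ constant with $K\ne0$. The relative nullity distribution $\Delta=\ker A$ has rank $n-2\ge 2$. By Codazzi it is integrable with totally geodesic leaves, and on $\Delta^\perp$ one has the splitting tensor $C_T$, $T\in\Delta$, defined by $C_TX=-(\nabla_X T)^{\Delta^\perp}$. The standard equations give
\[
\nabla_T A = A\,C_T
\quad\text{and}\quad
\nabla_T C_S = C_S C_T + \langle S,T\rangle\, I
\]
along the leaves, using $c=1$. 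Since $\det A$ is constant, $\operatorname{tr}C_T=0$. Taking the trace of the second equation with $S=T$, and using that $\operatorname{tr}C_T=0$ is preserved, gives $\operatorname{tr}(C_T^2)=-2|T|^2$. For traceless $2\times2$ matrices this means $C_T^2=-|T|^2 I$. Polarizing, $T\mapsto C_T$ is a linear map from $\Delta$, of dimension at least $2$, into $\mathrm{End}(\R^2)$ with $C_SC_T+C_TC_S=-2\langle S,T\rangle I$. This is a representation of the Clifford algebra $Cl_{0,2}\cong\Hi$ (quaternions) on $\R^2$, which is impossible. Hence rank two does not occur, and the three cases together give the stated dichotomy.

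I expect the delicate points to be the derivation of the Riccati-type equation for $C_T$ with the correct sign from Codazzi and Gauss, and checking that the trace condition really propagates along the leaves.
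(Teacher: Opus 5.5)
Your proof is correct. The paper gives no proof of this statement: it is quoted from Tsukada and used as a black box, e.g.\ in Corollary~\ref{cor:S-isopara}. So your argument is a self-contained route rather than a reconstruction of the paper's.

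Your split by the rank of $A$ reproduces, in any $\Q_c^{n+1}$, the general trichotomy quoted in the introduction. Rank $\le 1$ gives constant curvature $c$. Rank $\ge 3$ gives isoparametric, by the Beez--Killing lemma plus a connectedness argument. So the only input specific to $\Sp^{n+1}$ and $n\ge 4$ is your exclusion of rank two. There, $\otr C_T=0$ and the Riccati equation force $C_SC_T+C_TC_S=-2c\langle S,T\rangle I$ on the $2$-plane $\Delta^\perp$.

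This makes clear why both hypotheses are needed, which the citation does not. For $n=3$ ($\dim\Delta=1$) there is no obstruction, matching the Bryant--Florit--Ziller examples in $\Sp^4$. For $c=-1$ the relations admit a $2$-dimensional real representation exactly when $\dim\Delta\le 2$. This matches Tsukada's four-dimensional rank-two example in $\Hi^5$ and the absence of such examples for $n\ge 5$. What the citation buys the paper, in turn, is the full classification in all three space forms, including the existence side.

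Three small things to tidy:
\begin{enumerate}
\item $\Lambda^2A$ has rank $\binom{r}{2}$, which does not separate $r=0$ from $r=1$. This is harmless, since you group them.
\item ``$\det A$ is constant'' should read ``$\det(A|_{\Delta^\perp})=\lambda\mu$ is constant''.
\item The evolution equation for $C$ contains an extra term $C_{\nabla_T S}$, unless $\nabla_T C_S$ is read as the covariant derivative of the tensor $C$. Since $\otr C_V=0$ for every $V\in\Delta$, this term is traceless, and your conclusion $C_T^2=-|T|^2I$ is unaffected.
\end{enumerate}
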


\begin{theorem}\cite{Tsukada1988}\label{thmH}
    Let $M^n$ be an $n(\geq 4)$-dimensional connected curvature homogeneous space and let $f$ be an isometric immersion of $M^n$ into $\Hi^{n+1}$. Then one of the following holds:
    \begin{enumerate}
        \item $M^n$ is a Riemannian manifold of constant curvature $-1$.
        \item $M^n$ is a Riemannian manifold of constant curvature $\kappa>-1$ and $f$ is totally umbilical.
        \item $M^n$ is locally isometric to $\Sp^r_{c_1}\times \Hi^{n-r}_{c_2}$, $1\leq r\leq n-1$,
        $\frac{1}{c_1}+\frac{1}{c_2}=-1$, $c_1>0$, $c_2<0$ and $f$ is locally congruent to the standard isometric embedding $\tilde f$ of $\Sp^r_{c_1}\times \Hi^{n-r}_{c_2}$ into $\Hi^{n+1}$.
        \item $n=4$ and $M^4$ is locally isometric to the example constructed in \cite[Section~4]{Tsukada1988} and $f$ is locally congruent to the corresponding isometric embedding therein.
    \end{enumerate}
\end{theorem}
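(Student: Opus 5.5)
The plan is to work entirely through the Gauss equation $R(X,Y)=c\,X\wedge Y+AX\wedge AY$ with $c=-1$, and to split according to the rank of the shape operator $A$. The curvature operator on $\Lambda^2T_pM$ then has eigenvalues $c+\lambda_i\lambda_j$ ($i<j$), where the $\lambda_i$ are the principal curvatures. Curvature homogeneity says these eigenvalues, with multiplicities, are the same at every point. Set $r(p)=\operatorname{rank}A_p$; the first step is to show that $r$ is constant on a dense open set. Then we argue on each connected component and finish with analyticity or continuity, together with the connectedness of $M$.

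\emph{Rank at least three.} Here the classical Beez--Killing/Thomas rigidity applies: $R$ determines $A$ up to sign, because the Gauss equation can be inverted algebraically once $\operatorname{rank}A\ge 3$. Hence the symmetric functions $\lambda_i\lambda_j$ are constant, so the $\lambda_i$ are constant up to a global sign, which is fixed by continuity. Thus $f$ is isoparametric. Cartan's theorem in $\Hi^{n+1}$ says an isoparametric hypersurface has at most two distinct principal curvatures, and these satisfy $\lambda\mu=1$ when there are two. With one principal curvature we get the totally umbilical case (2), with $\kappa=-1+\lambda^2>-1$. With two we get the standard $\Sp^r_{c_1}\times\Hi^{n-r}_{c_2}$ of case (3), where $c_1=-1+\lambda^2$, $c_2=-1+\mu^2$, and the relation $\frac1{c_1}+\frac1{c_2}=-1$ follows from $\lambda\mu=1$.

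\emph{Rank at most one.} Every $\lambda_i\lambda_j$ vanishes, so $M$ has constant curvature $-1$; this is case (1).

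\emph{Rank exactly two (the main obstacle).} Write $A=\operatorname{diag}(\lambda_1,\lambda_2,0,\dots,0)$. Curvature homogeneity forces $K=\lambda_1\lambda_2\neq 0$ to be constant, which is the same as constant scalar curvature. The relative nullity distribution $\Delta=\ker A$ has rank $n-2\ge 2$. By Codazzi it is integrable with totally geodesic leaves, which are also totally geodesic in $\Hi^{n+1}$. The next step is to introduce the splitting tensors $C_T=-(\nabla T)^{h}|_{\Delta^\perp}$ for $T\in\Delta$. They satisfy the Riccati-type equation $\nabla_S C_T=C_TC_S+C_{\nabla_ST}-c\,I$, and the Codazzi condition says that $AC_T$ is symmetric. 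The key extra input from $\det A=K$ constant is that $C_T$ commutes with a fixed structure, and in particular $\operatorname{tr}C_T$ is constrained. Because $n-2\ge2$, one has several independent $C_T$; their commutation relations together with the Riccati equation overdetermine the system. I expect the analysis to force $C_{T}$, for $T$ ranging over $\Delta$, to span at most a two-dimensional algebra isomorphic to $\mathbb{C}$ acting on $\Delta^\perp$. That would give $n-2\le 2$, i.e.\ $n=4$ (with flat-type degenerations excluded because $c<0$ and $K\neq0$). In the case $n=4$ the remaining structure equations should reduce to a finite-dimensional ODE/Frobenius system. Its solutions are unique up to congruence, and constructing them explicitly yields Tsukada's example, which is case (4). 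The hardest part is this rank-two bookkeeping: showing that the integrability conditions for the $C_T$ in $\Hi^{n+1}$ both kill all $n\ge5$ and pin down a unique local solution when $n=4$. I would first try to diagonalize or complexify $C_T$ relative to $A$ and compute the curvature of $\Delta^\perp$ directly.
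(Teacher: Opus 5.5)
\textbf{Gap: the rank-two case, which is the actual content of the theorem, is only conjectured.} The paper gives no proof of this statement; it quotes Tsukada. Your treatment of $\operatorname{rank}A\le 1$ and $\operatorname{rank}A\ge 3$ is fine: apply Beez--Killing to $J_{pq}^{-1}A_qJ_{pq}$ versus $A_p$, then Cartan's classification in $\Hi^{n+1}$. You also need no density or analyticity argument for the rank, since $\operatorname{rank}\Lambda^2A=\binom{r}{2}$ is read off from $R-c\,I\wedge I$ and is therefore globally constant.

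The rank-two case, however, you only describe as an expectation. The mechanism you expect cannot occur. A two-dimensional subalgebra of $\operatorname{End}(\Delta^\perp)$ isomorphic to $\mathbb{C}$ contains the identity. Your own ingredients force $\operatorname{tr}C_T=0$: Codazzi gives $\nabla_TA=AC_T$ on $\Delta^\perp$, hence $T(\lambda_1\lambda_2)=\lambda_1\lambda_2\operatorname{tr}C_T$, and $\lambda_1\lambda_2$ is a nonzero constant. So the $C_T$ cannot span such an algebra. Also, bounding the span of the $C_T$ bounds $n-2$ only if $T\mapsto C_T$ is injective, and you never address this.

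\textbf{How to exclude $n\ge 5$.} This step is short once you use the Riccati equation in its correct form $\nabla_SC_T=C_TC_S+C_{\nabla_ST}+c\langle S,T\rangle I$; you dropped the factor $\langle S,T\rangle$.
\begin{itemize}
\item Take the trace and use $\operatorname{tr}C_T\equiv 0$. This gives $\operatorname{tr}(C_SC_T)=-2c\langle S,T\rangle=2\langle S,T\rangle$, so $T\mapsto C_T$ is injective at every point.
\item The image lies in the $2$-dimensional space of trace-free $B$ with $AB$ symmetric. In a principal frame these are $B=\left(\begin{smallmatrix}a&b\\ (\lambda_1/\lambda_2)b&-a\end{smallmatrix}\right)$. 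Hence $n-2\le 2$, i.e.\ $n\le 4$.
\item For $n=4$ the map is onto this space, so positivity of $\operatorname{tr}B^2=2a^2+2(\lambda_1/\lambda_2)b^2$ on it forces $K>0$.
\end{itemize}

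\textbf{What remains unproved.} Case (4) is untouched. You would need to show that for $n=4$ the system (Riccati plus Codazzi, with $C$ an isomorphism from $\Delta$ onto that space) has, up to congruence, only Tsukada's solution, and that this solution exists. Saying it ``should reduce to a finite-dimensional ODE/Frobenius system'' with unique solutions is not an argument. The analogous rank-two, constant-scalar-curvature problem in $\Hi^4$ has a one-parameter family in addition to isolated examples (Theorem~\ref{thm:BFZhyper}). So rigidity in dimension $4$ must be extracted from the specific integrability conditions, and that is precisely what your proposal leaves out.
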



The remaining case in curvature homogeneity is the Bryant-Florit-Ziller rank-two hypersurfaces in $\Sp^4$ and $\Hi^4$, classified as follows.
\begin{theorem}\label{thm:BFZhyper}\cite{BFZ2025}
    Let ${\mathcal M}$ be the set of immersed rank-two hypersurfaces in $\Q_c^4~(c=\pm 1)$, whose induced metric has constant scalar curvature. Then ${\mathcal M}$ contains $f_c$ as the only complete example, an isolated hypersurface $\hat f_c$ with a circle of symmetries, and a one parameter family of hypersurfaces admitting no continuous symmetries.
    Moreover, up to a covering, any connected hypersurface in ${\mathcal M}$ is an open subset of one of these, provided it has no leaf of relative nullity of minimal points in the case $c=1$.
\end{theorem}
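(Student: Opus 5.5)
Since the statement is quoted from \cite{BFZ2025}, the plan below reconstructs their argument in outline rather than adding anything new. The overall strategy is to turn the constant scalar curvature condition into an overdetermined system on a surface, via the Gauss parametrization, and then integrate that system completely.

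\textbf{Reduction to a condition on the principal curvatures.} Let $f\colon M^3\to\Q_c^4$ have rank-two shape operator $A$, with nonzero principal curvatures $\lambda_1,\lambda_2$. By the Gauss equation, the scalar curvature is
\begin{equation*}
s=6c+2\lambda_1\lambda_2 .
\end{equation*}
So constant scalar curvature is equivalent to $\det(A|_{\ker A^{\perp}})=\lambda_1\lambda_2=K$, a constant. Rank exactly two forces $K\neq 0$ wherever $\lambda_1,\lambda_2$ are defined. The kernel of $A$ is the one-dimensional relative nullity distribution. By the Codazzi equations, its leaves are open pieces of geodesics of $\Q_c^4$, and $f$ is ruled by these geodesics.

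\textbf{Gauss parametrization.} Following \cite{BFZ2025}, I would parametrize $f$ by the space of leaves. This consists of a surface $g\colon L^2\to\Sp^4$ (for $c=1$) or into de Sitter space (for $c=-1$), given by the unit normal transported along the nullity geodesics, together with a support function $\gamma\in C^\infty(L)$. Then $f$ is recovered as
\begin{equation*}
f(x,t)=\gamma\, g+\nabla\gamma+(\text{a unit vector field along the leaf})\cdot t-\text{type terms}.
\end{equation*}
Along each leaf, the shape operator restricted to the complement of the nullity is the inverse of a symmetric operator $P(t)=\gamma I+\operatorname{Hess}\gamma+t\,B$. Here $B$ is built from the second fundamental form of $g$, and the dependence on $t$ involves $\cos t,\sin t$ (respectively $\cosh t,\sinh t$). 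The condition $\det P(t)^{-1}\equiv K$ must then hold identically in $t$. Expanding in the trigonometric (hyperbolic) basis splits this single identity into a finite list of equations on $(g,\gamma)$:
\begin{itemize}
\item the vanishing of the coefficient of the $t$-dependent terms, which forces $\operatorname{tr}$- and $\det$-type conditions relating $\operatorname{Hess}\gamma$ to the second fundamental form of $g$;
\item the constant term, which fixes $K$.
\end{itemize}

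\textbf{Integration and identification of the solutions.} I would write these conditions as an exterior differential system on an adapted frame bundle of $g$ and prolong it until it becomes involutive, or until it closes up as a Frobenius system. The expectation is that the solution space is finite-dimensional. Concretely, after prolongation all derivatives of the invariants of $g$ and $\gamma$ should be expressed algebraically in terms of finitely many of them, leaving a finite-dimensional ODE system with a first integral. The orbits of this system, modulo the isometry group of $\Q_c^4$, then account for everything:
\begin{itemize}
\item a homogeneous (fixed-point) solution, giving $f_c$;
\item a solution invariant under a circle, giving $\hat f_c$;
\item a one-parameter family of generic orbits with no continuous symmetries.
\end{itemize}
Symmetries are read off from the stabilizers of these orbits. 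For completeness, I would follow the nullity geodesics and show that they can be extended indefinitely only in the homogeneous case; in all other solutions the rank drops or $\gamma$ degenerates in finite time.

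\textbf{Main obstacle.} The hardest step is the prolongation and integration of the overdetermined system, in particular proving that no further branches appear. For $c=1$ there is an additional difficulty at points where $g$ is minimal: there the frame adapted to the second fundamental form degenerates, which is why the hypothesis excluding a leaf of relative nullity made of minimal points is needed. I would first try to handle generic points with the frame diagonalizing the second fundamental form of $g$, and then use analyticity of the solutions to extend across the singular set.
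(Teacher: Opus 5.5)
The paper gives no proof of this statement. It is quoted from Bryant--Florit--Ziller and used only as a black box (e.g.\ in Corollary~\ref{cor:S-isopara}), so your outline has to stand on its own. Your overall strategy is the right one: use the Gauss parametrization to turn the condition into an overdetermined system on a surface. Your formula $s=6c+2\lambda_1\lambda_2$ is also correct. However, the parametrization you then use is wrong for this problem. A support function $\gamma$ with $P(t)=\gamma I+\operatorname{Hess}\gamma+tB$, affine in $t$, is the Euclidean Gauss parametrization, where nullity leaves are straight lines.

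For $c=\pm1$ there is no support function. Since $\langle f,N\rangle=0$ and $\langle f,dN\rangle=-\langle df,N\rangle=0$, every point of a nullity leaf lies in the normal plane of the Gauss image $g\colon L^2\to\Sp^4$ (resp.\ de Sitter space). So $f$ is locally the unit normal bundle $(x,w)\mapsto w$ with $\langle w,w\rangle=c$, which is exactly how the paper describes $f_1$ and $f_{-1}$. On horizontal vectors the shape operator of $f$ is, up to sign, $(A^g_w)^{-1}$. Hence $\lambda_1\lambda_2=1/\det A^g_w$, and constant scalar curvature is equivalent to
\[
\det A^g_w=\frac{c}{K}\,\langle w,w\rangle\qquad\text{for every normal vector $w$ of $g$.}
\]
These are three pointwise algebraic conditions on the second fundamental form of $g$ alone, and this is the system that must be integrated. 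Yours has a spurious unknown $\gamma$ and the wrong dependence on $t$.

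The more serious gap is that the content of the theorem lies in the step you state only as an ``expectation''. That step must show that prolongation closes into a finite-type system whose solutions, modulo isometries, are exactly one homogeneous surface $g$, one circle-invariant one, and a one-parameter family without continuous symmetries, with no further branches. Nothing in your sketch derives this.

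Your completeness argument also looks in the wrong place. Once $\det A^g_w$ is a nonzero constant on a fibre, $f$ extends with rank two over the whole fibre (a great circle, resp.\ a complete geodesic of $\Hi^4$). So nothing degenerates ``in finite time'' along nullity leaves, and there is no $\gamma$ to degenerate. Uniqueness of the complete example needs a global argument on the leaf space $L^2$. Note also that $f_{-1}$ is non-isoparametric with only a two-parameter symmetry group, hence not homogeneous; ``homogeneous'' can only refer to $g$.

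Finally, the mean curvature of $f$ at $(x,w)$ is proportional to $\operatorname{tr}(A^g_w)^{-1}=2\langle\vec H_g(x),w\rangle/\det A^g_w$. So a nullity leaf consists of minimal points exactly when $g$ is minimal at $x$. You are right that for $c=1$ this is where the adapted frame degenerates (the curvature ellipse becomes a circle). But under the hypothesis that set is empty, so there is nothing to extend across. Without the hypothesis, your appeal to analyticity gives no argument for precisely the case Bryant--Florit--Ziller exclude.
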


We now describe the two complete examples in Theorem~\ref{thm:BFZhyper} explicitly. The spherical example $f_1$ is the unit normal bundle of the Veronese surface $\RP^2_{1/3}\subset\Sp^4$, which is itself a homogeneous isoparametric hypersurface with three distinct principal curvatures.
The hyperbolic example $f_{-1}$ is the unit normal bundle of the flat
torus $g=(g_0,1)\colon T^2\to\Sp^4_{-1}\subset\R^{4,1}$, where
$g_0\colon T^2\to\Sp^3(\sqrt{2})\subset\R^4\times\{0\}\subset\R^{4,1}$ is the minimal equivariant Clifford torus; this hypersurface is non-isoparametric and admits a two parameter family of symmetries inherited from the Clifford torus. The remaining example $\hat f_c$ and the one parameter family without continuous symmetries are described in detail in \cite[Sections~5 and~6]{BFZ2025}.

Next we address the question of when a hypersurface with constant Ricci eigenvalues is in fact isoparametric. Using the Gauss equation and the continuity of the principal curvatures, we obtain the following characterization.
\begin{theorem}\label{thm:cmc}
    Let $M^n$ be an $n(\geq 3)$-dimensional connected hypersurface immersed in a real space form $\Q_c^{n+1}$ with constant Ricci eigenvalues. Then $M^n$ is isoparametric if and only if the mean curvature $H$ of $M^n$ is constant.
\end{theorem}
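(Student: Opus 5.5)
The plan is to use the Gauss equation in the form of a quadratic relation between the Ricci operator and the shape operator. Let $A$ be the shape operator with principal curvatures $\lambda_1,\dots,\lambda_n$ and mean curvature $H=\operatorname{tr}A$; we only need that $H$ is a fixed multiple of $\operatorname{tr}A$. The Gauss equation gives
\[
\oRic = (n-1)c\,I + H A - A^2 .
\]
Hence $A$ and $\oRic$ are simultaneously diagonalizable. On an eigenvector $e_i$ of $A$ the Ricci eigenvalue is
\[
\rho_i=(n-1)c+H\lambda_i-\lambda_i^2 .
\]
The direction ``isoparametric $\Rightarrow$ $H$ constant'' is trivial, so the whole content lies in the converse.

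For the converse, assume $H$ is constant. For each $i$, $\lambda_i$ is a root of the quadratic
\[
t^2-Ht+\rho_i-(n-1)c=0 .
\]
By hypothesis the set of Ricci eigenvalues is a fixed finite set $\{\rho^{(1)},\dots,\rho^{(k)}\}$. Therefore every principal curvature at every point lies in the fixed finite set
\[
S=\left\{\tfrac{H\pm\sqrt{H^2-4(\rho^{(j)}-(n-1)c)}}{2}\right\}.
\]
Order the principal curvatures $\lambda_1\le\dots\le\lambda_n$. These are continuous functions on $M$, and each takes values in the finite set $S$. Since $M$ is connected, each $\lambda_i$ is constant, so $M$ is isoparametric.

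There is a subtlety about whether ``constant Ricci eigenvalues'' means only that the eigenvalue set is constant or also that the multiplicities are. Either reading suffices here, because we only use that the values lie in a fixed finite set. The remaining point is that ordered eigenvalues of a continuous symmetric endomorphism field are continuous. This is standard, e.g.\ via the min--max characterization, and requires no smoothness of eigenvectors. So I expect no serious obstacle. The only care needed is to avoid assuming smooth eigenframes and to work with the continuous ordered functions $\lambda_i$ directly, rather than attempting to differentiate them.
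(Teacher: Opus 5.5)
Your proposal is correct and follows essentially the same route as the paper: with $H$ constant, the Gauss equation confines every principal curvature to a fixed finite set, and continuity on connected $M$ forces each principal curvature to be constant. Your explicit use of the ordered eigenvalues $\lambda_1\le\dots\le\lambda_n$, continuous by min--max, makes precise the continuity of the principal curvature functions that the paper simply asserts.
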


Theorem \ref{thm:cmc} gives strong support to the famous Chern Conjecture (even for the local version): A closed minimal hypersurface with constant scalar curvature in $\mathbb{S}^{n+1}$ is isoparametric (see an important progress by Tang-Wei-Yan \cite{T-W-Y} and Tang-Yan \cite{T-Y}).

Our main Theorem~\ref{thm:main} also yields some sufficient conditions:
\begin{corollary}\label{cor:Ric-isopara}
      Let $M^n$ be an $n(\geq 3)$-dimensional connected hypersurface immersed in  a real space form $\Q_c^{n+1}$ with constant Ricci eigenvalues. If one of the following conditions is satisfied:
      \begin{itemize}
        \item[(1)] $M^n$ has at least three distinct constant Ricci eigenvalues;
        \item[(2)] $(n-1)c$ is not a Ricci eigenvalue of $M^n$;
        \item[(3)] the multiplicity of the Ricci eigenvalue $(n-1)c$ is not one of $\{n,n-2\}$,
      \end{itemize}
      then $M^n$ is isoparametric.
\end{corollary}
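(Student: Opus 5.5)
By Theorem~\ref{thm:main}, the hypothesis of constant Ricci eigenvalues makes $M^n$ curvature homogeneous. Tsukada's characterization, quoted in the introduction for all $n\ge 3$, then says $M^n$ is isoparametric, or has constant sectional curvature $c$, or has rank-two shape operator with constant scalar curvature. The plan is to show that each of conditions (1)--(3) rules out the two non-isoparametric alternatives, by computing the Ricci spectrum in those two cases. Note that Theorems~\ref{thmR}--\ref{thm:BFZhyper} are not needed; only this trichotomy is used.

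The tool is the Gauss equation. In a local principal frame $e_1,\dots,e_n$ with principal curvatures $\lambda_1,\dots,\lambda_n$ and $H=\sum_j\lambda_j$ (unnormalized), it gives
\begin{equation*}
\oRic(e_i)=\bigl((n-1)c+\lambda_i(H-\lambda_i)\bigr)e_i .
\end{equation*}
So the Ricci eigenvalues are $(n-1)c+\lambda_i\sum_{j\neq i}\lambda_j$.

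\emph{Constant sectional curvature $c$.} Here $\oRic=(n-1)c\,\mathrm{Id}$, so the only Ricci eigenvalue is $(n-1)c$, with multiplicity $n$.
\begin{itemize}
\item It contradicts (1), since there is only one eigenvalue.
\item It contradicts (2), since that eigenvalue is $(n-1)c$.
\item It contradicts (3), since the multiplicity is $n$.
\end{itemize}

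\emph{Rank-two shape operator.} At each point the principal curvatures are $\lambda_1,\lambda_2\neq 0$ together with $0$ of multiplicity $n-2$. The formula above gives the following.
\begin{itemize}
\item On the kernel of the shape operator, the Ricci eigenvalue is $(n-1)c$, with multiplicity $n-2\ge 1$.
\item On $e_1$ and $e_2$, the Ricci eigenvalue is $(n-1)c+\lambda_1\lambda_2$ in both cases.
\end{itemize}
Hence there are at most two distinct Ricci eigenvalues, which contradicts (1). Moreover $(n-1)c$ always occurs, since $n-2\ge1$, which contradicts (2).

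Since $\lambda_1\lambda_2\neq0$, the second eigenvalue is genuinely different from $(n-1)c$, so the multiplicity of $(n-1)c$ is exactly $n-2$, which contradicts (3). One should also check that the multiplicity count is well defined, because the eigenvalues are assumed constant. If Tsukada's rank-two case is read as allowing rank $\le 2$, then points of rank $\le1$ give $\oRic=(n-1)c\,\mathrm{Id}$. By constancy of the Ricci eigenvalues and connectedness, this forces the previous (constant curvature $c$) case globally, which is already excluded.

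Therefore, under any one of (1)--(3), the only remaining alternative is that $M^n$ is isoparametric. The only real content is invoking Theorem~\ref{thm:main} together with Tsukada's trichotomy; the step needing the most care is the multiplicity bookkeeping in the rank-two case, namely making sure that $\lambda_1\lambda_2\neq0$ so that the multiplicity of $(n-1)c$ is exactly $n-2$.
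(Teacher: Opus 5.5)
Your proof is correct, but you reach the key dichotomy by a different route than the paper.

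\textbf{Your route.} You use Theorem~\ref{thm:main} as a black box and invoke Tsukada's trichotomy. You then read off the Ricci spectra of the two non-isoparametric families from the Gauss equation:
\begin{itemize}
\item[(a)] constant curvature $c$: the eigenvalue $(n-1)c$ with multiplicity $n$;
\item[(b)] rank two: $(n-1)c$ with multiplicity $n-2$, plus $(n-1)c+\lambda_1\lambda_2\neq(n-1)c$ with multiplicity $2$.
\end{itemize}

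\textbf{The paper's route.} It does not go back to Tsukada here. Instead it reuses the mechanism inside the proof of Theorem~\ref{thm:main}. A nonconstant $H$ forces $F_\sigma\equiv0$ on a component of $D$. Proposition~\ref{prop:F=0} then gives $c_{j_0}=0$ and $|\sigma_{j_0}|=n-2$. So $(n-1)c$ is a Ricci eigenvalue of multiplicity $n$ or $n-2$, with $k\in\{1,2\}$. Conditions (1)--(3) rule this out, so $H$ is constant, and Theorem~\ref{thm:cmc} gives isoparametricity.

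\textbf{What each buys.} The paper's argument is self-contained. It needs only Lemma~\ref{lem:Image(H)}, Propositions~\ref{prop:F=0} and~\ref{prop:F!0}, and the elementary Theorem~\ref{thm:cmc}. It never uses curvature homogeneity, Tsukada's classification, or the patching across the degeneracy set $n^2H^2+4c_k=0$ in the proof of Theorem~\ref{thm:main}.

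Your argument is shorter and shows why exactly these three conditions appear: they are the Ricci signatures of the two exceptional families. The price is reliance on a deep external classification, read as a global alternative on connected $M$. That reading is legitimate. By the Gauss equation, the curvature operator of $R-cR_0$ is $\Lambda^2A$, whose rank is $\binom{r}{2}$ with $r=\mathrm{rank}\,A$. Hence curvature homogeneity keeps $M$ in a single class.

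Your treatment of a possible ``rank $\le 2$'' reading is also fine. At a point of rank $\le 1$ the Ricci spectrum is $\{(n-1)c\}$ with multiplicity $n$. By constancy this is the global spectrum, which already violates (1)--(3), so you do not need the claim that this forces constant curvature $c$.
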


Combining Ryan's classification of complete Einstein hypersurfaces in spheres and Euclidean spaces \cite{Ryan1969} with Theorems \ref{thm:main}-\ref{thm:BFZhyper}, we have the complete version: 
\begin{corollary}\label{cor:S-isopara}
    Let $M^n$ be an $n(\geq 3)$-dimensional connected, complete hypersurface immersed in $\Sp^{n+1}$ with constant Ricci eigenvalues. Then $M^n$ is isoparametric.
\end{corollary}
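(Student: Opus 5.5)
By Theorem~\ref{thm:main}, $M^n$ is curvature homogeneous, so I will run through the possibilities given by Tsukada's classification, Theorems~\ref{thmS} and \ref{thm:BFZhyper}, and show that each one is either isoparametric or ruled out by completeness.

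For $n\geq 4$, Theorem~\ref{thmS} leaves only two possibilities. Either $M^n$ is isoparametric, and we are done, or $M^n$ has constant sectional curvature $1$. In the second case $M^n$ is Einstein with $\rho=(n-1)$. I will then use Ryan's classification of complete hypersurfaces of constant curvature $1$ in $\Sp^{n+1}$. Since $n\geq 3$, the Gauss equation forces the shape operator to have rank at most one. For complete such hypersurfaces one gets total geodesy: the relative nullity leaves are complete great spheres of dimension $n-1$, and for a complete immersed flat-type case in the sphere these leaves must meet, which forces the rank to vanish identically. Ryan's complete version, cited in the statement, gives exactly this, so $M^n$ is totally geodesic and hence isoparametric.

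For $n=3$, constant Ricci eigenvalues is the same as curvature homogeneity, and Tsukada's characterization applies. So $M^3$ is either isoparametric, or of constant curvature $1$, or has rank-two shape operator with constant scalar curvature.
\begin{itemize}
\item The constant curvature case is handled exactly as above by completeness.
\item In the rank-two case, Theorem~\ref{thm:BFZhyper} says the only complete example is $f_1$, the unit normal bundle of the Veronese surface. This is an isoparametric hypersurface with three principal curvatures, one of which is zero, so it is again isoparametric.
\end{itemize}

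The main obstacle is applying Theorem~\ref{thm:BFZhyper} in the rank-two case. Its uniqueness statement holds only up to covering and under the proviso that there is no leaf of relative nullity consisting of minimal points. I would handle this in two steps.
\begin{enumerate}
\item First I would use completeness: the nullity leaves are complete great circles, and on a complete hypersurface the rank cannot degenerate to zero along an open set without contradicting rank two.
\item Then I would argue that a complete hypersurface which is locally one of the classified examples must, by analyticity and connectedness, be globally a covering of $f_1$.
\end{enumerate}
If these points cannot be settled directly from the statement of Theorem~\ref{thm:BFZhyper}, I would appeal to the complete classification as stated in \cite{BFZ2025}, which asserts that $f_1$ is the only complete example.
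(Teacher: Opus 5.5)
Your proposal is correct and follows essentially the same route as the paper: Theorem~\ref{thm:main} reduces to curvature homogeneity, Ryan's classification of complete Einstein hypersurfaces in spheres (forcing great spheres) disposes of the constant-curvature-$1$ case, Theorem~\ref{thmS} handles $n\geq 4$, and for $n=3$ Theorem~\ref{thm:BFZhyper} identifies the only complete rank-two example $f_1$ as the minimal Cartan isoparametric hypersurface. Your hedging about the proviso is unnecessary, since it restricts only the local (``Moreover'') part of Theorem~\ref{thm:BFZhyper}, whose first sentence already says $f_1$ is the only complete example; also, your step (1) targets the wrong issue (the proviso is about leaves of minimal points, not a rank drop, which constancy of $\mu\nu\neq 0$ already rules out), and your ``leaves must meet'' heuristic is not needed once Ryan is cited.
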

\begin{corollary}\label{cor:R-isopara}
    Let $M^n$ be an $n(\geq 3)$-dimensional connected, complete nonflat hypersurface immersed in $\R^{n+1}$ with constant Ricci eigenvalues. Then $M^n$ is isoparametric.
\end{corollary}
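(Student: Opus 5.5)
By Theorem~\ref{thm:main}, a connected hypersurface $M^n\subset\R^{n+1}$ with constant Ricci eigenvalues is curvature homogeneous, so Tsukada's Theorem~\ref{thmR} applies. The plan is to run through its three alternatives and use completeness only to eliminate the non-isoparametric part of the third. Alternative (1), flat, is excluded by hypothesis. In alternative (2), $f$ is locally congruent to the standard embedding of $\Sp^r_\kappa\times\R^{n-r}$, $3\le r\le n$. Its principal curvatures are $\sqrt{\kappa}$ with multiplicity $r$ and $0$ with multiplicity $n-r$, which are constant, so $M$ is isoparametric.

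The substantive case is alternative (3). Here $M$ is locally isometric to $M^2_\kappa\times\R^{n-2}$ with $\kappa\neq0$, and $f$ is locally congruent to $f_1\times\mathrm{id}$. Consequently the shape operator has rank exactly two everywhere. By the Gauss equation the Ricci eigenvalues are $\kappa$ with multiplicity $2$ and $0$ with multiplicity $n-2$. The $0$-eigenspace of $\oRic$ is a globally defined smooth distribution $\Delta$ of rank $n-2$, and it coincides with the relative nullity $\ker A$.

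The first key step is to globalize the local splitting. By the local product structure, $\Delta$ and $\Delta^\perp$ are parallel. Pass to the universal cover $\tilde M$, which is complete since $M$ is. The de Rham decomposition then gives $\tilde M=\Sigma^2\times\R^{n-2}$ isometrically, where $\Sigma^2$ is a complete simply connected surface of constant curvature $\kappa$.

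Next we transfer the splitting to the immersion. On $\Delta$ we have $A=0$, and $\Delta$ is parallel. Hence for $X\in\Gamma(\Delta)$ and any $Y$,
\[
\bar\nabla_Y X=\nabla_Y X+\langle AY,X\rangle\xi=\nabla_Y X\in\Gamma(\Delta).
\]
So $f_*\Delta$ is a constant $(n-2)$-plane $V\subset\R^{n+1}$, and the leaves of $\Delta$ (complete flat factors $\R^{n-2}$) are mapped isometrically onto affine translates of $V$. Thus the lifted immersion $\tilde f$ splits globally as $g\times\mathrm{id}\colon\Sigma^2\times\R^{n-2}\to V^\perp\times V=\R^3\times\R^{n-2}$, where $g\colon\Sigma\to\R^3$ is an isometric immersion of a complete surface of constant curvature $\kappa$.

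Finally we apply the classical rigidity for surfaces in $\R^3$. If $\kappa<0$, Hilbert's theorem (or Efimov's theorem) forbids a complete immersed surface of constant negative curvature in $\R^3$, a contradiction. If $\kappa>0$, Liebmann's theorem (via Hilbert's lemma for complete compact surfaces) shows that $g$ is a round sphere. Then the principal curvatures of $M$ are $\sqrt\kappa,\sqrt\kappa,0,\dots,0$, all constant, and $M$ is isoparametric.

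The main obstacle I expect is the global splitting of the immersion, not merely of the metric. Tsukada's theorem gives the congruence to $f_1\times f_2$ only locally, so it has to be checked that the constant direction $V$ is the same on all of $M$. This follows from connectedness, since the Gauss map of $\Delta$ is locally constant. It also has to be checked that completeness passes to $\Sigma$, which follows from the product structure of the complete cover $\tilde M$.
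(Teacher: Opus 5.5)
Your proof is correct and follows the same route as the paper. Theorem~\ref{thm:main} reduces the statement to Tsukada's Theorem~\ref{thmR}, and the flat case is excluded by hypothesis. In the rank-two case, the complete universal cover splits as a global product, and its complete two-dimensional factor of curvature $\kappa\neq0$ must be a round sphere.

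Your version is more careful than the paper's in two places. First, you check that the immersion itself splits globally, not just the metric, using the constant plane $V=f_*\Delta$; the paper skips this. Second, you cite Hilbert's and Liebmann's theorems, which are the results actually needed for the surface factor, whereas the paper appeals loosely to Ryan's classification of complete Einstein hypersurfaces.
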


For the hyperbolic case, we observe that non-isoparametric examples occur only in dimensions $n=3$ and $n=4$ in Theorems \ref{thm:BFZhyper} and \ref{thmH}. Thus we have the local version: 
\begin{corollary}\label{cor:H-isopara}
    Let $M^n$ be an $n(\geq 5)$-dimensional connected hypersurface immersed in $\Hi^{n+1}$ with constant Ricci eigenvalues. If $M^n$ is not of constant sectional curvature $-1$, then $M^n$ is isoparametric.
\end{corollary}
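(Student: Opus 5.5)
The statement to prove is Corollary~\ref{cor:H-isopara}: $M^n\subset\Hi^{n+1}$ with $n\ge 5$, constant Ricci eigenvalues, and not of constant sectional curvature $-1$ must be isoparametric. The plan is to reduce to Tsukada's hyperbolic classification via Theorem~\ref{thm:main}, and then inspect each case of Theorem~\ref{thmH}.

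\textbf{Step 1 (reduction).} By Theorem~\ref{thm:main}, $M^n$ is curvature homogeneous. Since $n\ge 5\ge 4$, Theorem~\ref{thmH} applies. Case (1), constant curvature $-1$, is excluded by hypothesis. Case (4) occurs only for $n=4$, so it is excluded by $n\ge 5$. Only cases (2) and (3) remain.

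\textbf{Step 2 (case (2)).} Here $f$ is totally umbilical, so the shape operator is $A=\lambda\,\mathrm{Id}$. By the Gauss equation the sectional curvature is $\kappa=-1+\lambda^2$. Since $\kappa$ is constant, $\lambda^2$ is constant. By continuity on connected $M$, $\lambda$ is constant after fixing the unit normal locally; if $\lambda$ vanished it would do so identically, giving $\kappa=-1$. Hence all principal curvatures are constant.

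\textbf{Step 3 (case (3)).} Here $f$ is locally congruent to the standard embedding of $\Sp^r_{c_1}\times\Hi^{n-r}_{c_2}$, the tube / standard product hypersurface in $\Hi^{n+1}$. Its principal curvatures are two constants $\lambda,\mu$, with multiplicities $r$ and $n-r$, satisfying $-1+\lambda^2=c_1$, $-1+\mu^2=c_2$ and $\lambda\mu=1$. The last relation is equivalent to $\tfrac1{c_1}+\tfrac1{c_2}=-1$. Local congruence preserves principal curvatures, so $M^n$ is isoparametric.

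\textbf{Possible obstacles.} The only delicate point is making sure that Step 3's local congruence holds on all of $M$ and not merely near generic points. If the congruence is only given locally, one argues as follows: on each open piece the principal curvatures equal the same constants, which are fixed by the fixed intrinsic data $c_1,c_2$. By continuity and connectedness, they are therefore globally constant. A similar continuity remark handles the sign choice in Step 2. No step is a genuine obstacle; the corollary is a direct bookkeeping consequence of Theorem~\ref{thm:main} and Theorem~\ref{thmH}.
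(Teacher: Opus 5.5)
Your proposal is correct and follows the same route as the paper: Theorem~\ref{thm:main} gives curvature homogeneity, Theorem~\ref{thmH} with $n\ge 5$ leaves only the totally umbilical case and the $\Sp^r_{c_1}\times\Hi^{n-r}_{c_2}$ case, and both are isoparametric. The only difference is that you explicitly verify the constancy of the principal curvatures in those two cases ($\lambda^2=\kappa+1$, and $\lambda\mu=1$ with $\tfrac1{c_1}+\tfrac1{c_2}=-1$), which the paper simply asserts.
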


The rest of this paper is organized as follows.  In Section~\ref{sec2}, we present some preliminaries and reduce the problem to a study of the mean curvature function, showing in particular that its image is contained in the zero set of an explicitly constructed real-analytic function $F_\sigma$ (Lemma~\ref{lem:Image(H)}). By the identity theorem for real-analytic functions,  either $F_\sigma\equiv0$ or the image of the mean curvature function is discrete. In Section~\ref{sec3},  we first determine when $F_\sigma\equiv0$ (Proposition \ref{prop:F=0}); combining these preparations, we then apply the classification of curvature homogeneous hypersurfaces by Tsukada and Bryant-Florit-Ziller, and apply the classification of Einstein hypersurfaces by Ryan, to prove the main Theorem \ref{thm:main}, Theorem \ref{thm:cmc}, and Corollaries \ref{cor:Ric-isopara}--\ref{cor:H-isopara}.

\section{Preliminaries}\label{sec2}
Let $f \colon M^n \to \Q_c^{n+1}$ be an isometric immersion of a connected hypersurface $M^n$ into the simply connected real space form $\Q_c^{n+1}$ of constant sectional curvature $c$. Denote by $g$ the induced metric on $M^n$. For each point $p \in M$, the shape operator $A$ with respect to the unit normal vector field $\nu$ is a symmetric endomorphism of the tangent space $T_{p}M$.
Then the Gauss equation implies that
$$\oRic(X,Y)=(n-1)c\,g(X,Y)+ \otr(A)\,g(AX,Y)-g(A^2X,Y)$$
for all tangent vector fields $X,Y$ on $M$.  

For spectral considerations, it is often convenient to identify the $(0,2)$-tensor $\oRic$ with the corresponding $(1,1)$-tensor $\widehat\oRic$ by raising an index using the metric $g$, namely
$$g\bigl( \widehat\oRic(X), Y \bigr) = \oRic(X, Y), \quad \forall X, Y \in \mathfrak{X}(M).$$
Equivalently, the Ricci operator is given by
$$\widehat\oRic=(n-1)c\,Id+nHA-A^2,$$
where $H=\frac{1}{n}\otr(A)$ is the mean curvature of $M$. Since $\oRic$ is symmetric, the operator $\widehat\oRic$ is self-adjoint with respect to $g$. Hence, at each point $p\in M$, all eigenvalues of $\widehat\oRic_p \colon T_pM \to T_pM$ are real, and these eigenvalues are called the Ricci eigenvalues of $M$ at $p$.

Since $\widehat\oRic$ is a polynomial in $A$, the operators $\widehat\oRic$ and $A$ commute and can be simultaneously diagonalized at each point. Therefore, let $\{e_1,\cdots,e_n\}$ be an orthonormal basis of principal directions at an arbitrarily fixed point $p\in M$, so that
$$A(e_i)= \lambda_i e_i, \quad 1 \leq i \leq n,$$
then
$$\widehat\oRic(e_i)=\bigl((n-1)c+nH\lambda_i-\lambda_i^2\bigr)e_i,  \quad 1 \leq i \leq n.$$
Hence, the Ricci eigenvalues at $p$ are
$$\{(n-1)c+nH\lambda_i-\lambda_i^2,~ 1 \leq i \leq n\}.$$

Throughout this paper, we assume that $n \geq 3$ and $M^n$ has exactly $k~(1 \leq k \leq n)$ distinct constant Ricci eigenvalues, labeled by $\rho_1<\cdots<\rho_k$, with respective multiplicities $m_1,\cdots,m_k$, where $\sum_{j=1}^k m_j=n$. Hence, for every principal curvature $\lambda_i$, there exists
an index $j\in\{1,\cdots,k\}$ such that
\begin{equation}\label{eq:Gauss}
    \rho_j=(n-1)c+nH\lambda_i-\lambda_i^2.
\end{equation}
At each point of $M$, by equation \eqref{eq:Gauss}, every $\lambda_i$ corresponding to $\rho_j$ is a real root of the quadratic equation
\begin{equation}\label{eq:quadratic}
    x^2-nHx-c_j=0,
\end{equation}
where $c_j:=(n-1)c-\rho_j$ is constant. Therefore, for each fixed $j\in\{1,\cdots,k\}$, the possible principal curvatures corresponding to the Ricci eigenvalue $\rho_j$ are pointwise given by
\begin{equation}\label{root}
    \tilde\lambda_j^{\,\pm}(p)
    :=\frac12\Bigl(nH(p)\pm\sqrt{n^2H(p)^2+4c_j}\Bigr),
\end{equation}
whenever the discriminant $n^2H(p)^2+4c_j$ is nonnegative. In other words, if a principal direction
$e_i$ belongs to the eigenspace of the Ricci tensor associated with $\rho_j$, then its principal
curvature $\lambda_i(p)$ must be equal to one of the two values $\tilde\lambda_j^{\,+}(p)$ or
$\tilde\lambda_j^{\,-}(p)$.

\begin{remark}
    Notice that $\tilde\lambda_j^{\,\pm}$ are not the principal curvatures of $M$ themselves, but rather the two \emph{a priori} possible root values determined by the Ricci eigenvalue $\rho_j$. The actual principal curvatures of $M$ are the $\lambda_i$'s, counted with multiplicities, obtained by selecting pointwise, for each $j$, one or both of the values $\tilde\lambda_j^{\,+}$ and $\tilde\lambda_j^{\,-}$ according to the corresponding multiplicity distribution in the Ricci eigenspace of $\rho_j$.
\end{remark}

\begin{lemma}\label{lem:degeneracy}
    For every $j\in\{1,\cdots,k\}$ one has
    $$n^2H^2+4c_j\ge 0 \quad \text{on } M.$$
    Moreover, if $n^2H^2+4c_j=0$ at some point of $M$, then necessarily $j=k$.
\end{lemma}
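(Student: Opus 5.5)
The lemma needs two facts: the discriminant $n^2H^2+4c_j$ is nonnegative for each $j$, and it can vanish only for $j=k$. I will work pointwise at a fixed $p\in M$.

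\textbf{Nonnegativity.} Since $\rho_j$ is a Ricci eigenvalue, it is attained at every point. So at $p$ there is a principal direction $e_i$ with $\widehat\oRic(e_i)=\rho_j e_i$.

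Then the real number $\lambda_i(p)$ solves the quadratic \eqref{eq:quadratic}, namely $x^2-nH(p)x-c_j=0$. A real quadratic with a real root has nonnegative discriminant, so $n^2H(p)^2+4c_j\ge0$.

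Equivalently, completing the square gives
$$\rho_j=(n-1)c+\tfrac{n^2H^2}{4}-\bigl(\lambda_i-\tfrac{nH}{2}\bigr)^2\le (n-1)c+\tfrac{n^2H^2}{4}.$$
This is exactly $n^2H^2+4c_j\ge0$, because $c_j=(n-1)c-\rho_j$.

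\textbf{Degeneracy forces $j=k$.} Suppose $n^2H(p)^2+4c_j=0$. By the displayed identity, $\rho_j=(n-1)c+\tfrac{n^2H(p)^2}{4}$.

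Apply the same identity to every Ricci eigenvalue at $p$:
$$\rho_l\le (n-1)c+\tfrac{n^2H(p)^2}{4}=\rho_j\quad\text{for all } l.$$
So $\rho_j$ is the largest of the Ricci eigenvalues. Since they are labelled $\rho_1<\cdots<\rho_k$, this gives $j=k$.

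The only subtle point is in the first step. We must use that each constant $\rho_j$ is realized at every point, so some principal curvature at $p$ produces it. This holds because the Ricci eigenvalues are assumed constant on $M$, and $A$ and $\widehat\oRic$ are simultaneously diagonalizable. The rest is the completion of the square.
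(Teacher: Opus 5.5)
Your proof is correct and follows essentially the paper's argument: nonnegativity comes from the real root of \eqref{eq:quadratic}, and vanishing forces $\rho_j$ to be the largest Ricci eigenvalue (the paper phrases this as a contradiction with $n^2H^2+4c_k\ge 0$ when $j<k$, since $c_k<c_j$). Your explicit remark that each constant $\rho_j$ is attained at every point, which the paper leaves implicit, is a welcome addition.
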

\begin{proof}
    For each $j$, the quantity $n^2H^2+4c_j$ is the discriminant of the quadratic equation \eqref{eq:quadratic} satisfied by the corresponding principal curvatures. Since principal curvatures are real on $M$, the discriminant must be nonnegative at every point, i.e., $n^2H^2+4c_j\ge 0$ on $M$.

    The ordering $c_1>\cdots>c_k$ follows immediately from the definition $c_j=(n-1)c-\rho_j$ and the ordering of the Ricci eigenvalues. If $n^2H^2+4c_j=0$ for some $j<k$, then
    $$0=n^2H^2+4c_j>n^2H^2+4c_\ell \quad \text{for all } \ell>j,$$ which is impossible. Therefore, degeneracy can occur only for the smallest $c_k$.
\end{proof}

For each fixed $j\in\{1,\cdots,k\}$, let $\sigma_j:M \to \R$ denote the pointwise signature, defined as the number of positive branches $\tilde\lambda_j^{\,+}$ minus the number of negative branches $\tilde\lambda_j^{\,-}$ among the $m_j$ principal curvatures \eqref{root}. For points $q \in M$ where the two roots coincide, by convention, we define $\sigma_j(q)=0$. Let $\mathcal A_j:=\{-m_j,-m_j+2,\cdots,m_j-2,m_j\}$. Then the following lemma holds.

\begin{lemma}\label{lem:delta}
    For every $j\in\{1,\cdots,k-1\}$, $\sigma_j$ is constant on $M$ and takes a value in $\mathcal A_j$. $\sigma_k$ is constant on any connected component of $M$ where $n^2H^2+4c_k>0$ and $\sigma_k \in \mathcal A_k \cup \{0\}$ on $M$.
\end{lemma}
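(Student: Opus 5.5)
The plan is to express $\sigma_j$ through a continuous quantity on $M$ and then use connectedness. Fix $j$ and write $D_j:=n^2H^2+4c_j$, which is continuous because $H$ is continuous (indeed smooth). The key identity comes from computing the trace of $A$ restricted to the Ricci eigenspace $E_j$ of $\rho_j$. Since $\widehat\oRic$ has constant eigenvalues with constant multiplicities, $E_j$ has constant rank $m_j$. The spectral projection $P_j$ onto $E_j$ is a polynomial in $\widehat\oRic$ with constant coefficients (Lagrange interpolation at the constant $\rho_\ell$'s), so $P_j$ is smooth. Hence $t_j:=\otr(A\circ P_j)$ is a smooth function on $M$. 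On the other hand, by \eqref{root},
$$t_j=\sum_{e_i\in E_j}\lambda_i=\frac{m_j}{2}\,nH+\frac{\sigma_j}{2}\sqrt{D_j}.$$
Therefore, wherever $D_j>0$,
$$\sigma_j=\frac{2t_j-m_jnH}{\sqrt{D_j}},$$
which is a continuous function on the open set $\{D_j>0\}$.

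For $j\le k-1$, Lemma~\ref{lem:degeneracy} gives $D_j>0$ everywhere on $M$, since degeneracy can occur only for $j=k$. So $\sigma_j$ is continuous on all of $M$. By definition it takes values in the discrete set $\mathcal A_j$: if $p$ of the $m_j$ principal curvatures use the $+$ branch, then $\sigma_j=2p-m_j\in\mathcal A_j$. A continuous integer-valued function on the connected manifold $M$ is constant.

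For $j=k$, the same argument applies on the open set $U=\{D_k>0\}$: $\sigma_k$ is continuous there with values in $\mathcal A_k$, so it is constant on each connected component of $U$. At points where $D_k=0$ the two roots coincide, and the convention sets $\sigma_k=0$. Hence $\sigma_k\in\mathcal A_k\cup\{0\}$ on all of $M$.

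The only delicate point is justifying that $t_j$ is continuous, i.e.\ that the eigenspace $E_j$ varies continuously. This holds because the Ricci eigenvalues are constant and distinct, so the Lagrange projection formula applies globally. One also has to check that no branch ambiguity arises where $D_j>0$; this is fine because there the roots $\tilde\lambda_j^{\pm}$ are distinct, so each $\lambda_i$ with $e_i\in E_j$ is unambiguously assigned to one branch.
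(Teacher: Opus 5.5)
Your proof is correct, and it takes a different route from the paper's.

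The paper argues pointwise with individual principal curvatures. For $j<k$ the roots $\tilde\lambda_j^{\pm}$ never coincide (Lemma~\ref{lem:degeneracy}), and the paper asserts that, because principal curvature functions are continuous, no principal direction can switch branches. Hence the number of $+$ and $-$ branches, and so $\sigma_j$, is constant. The same reasoning is applied to $\sigma_k$ away from the degeneracy set.

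You never track individual principal curvatures or directions. You note that the projection onto $E_j$ is the Lagrange polynomial $P_j=\prod_{\ell\neq j}(\widehat\oRic-\rho_\ell\,Id)/(\rho_j-\rho_\ell)$ with constant coefficients, so $t_j=\otr(AP_j)$ is smooth. You then get the closed formula $\sigma_j=(2t_j-m_jnH)/\sqrt{n^2H^2+4c_j}$ on $\{n^2H^2+4c_j>0\}$, and constancy reduces to continuity plus the discreteness of $\mathcal A_j$.

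What your version buys is rigor. The paper's phrase ``for each principal direction'' silently needs continuous principal curvature functions attached to $E_j$. For instance, one would take the ordered eigenvalues $\lambda_i$ of $A$ and observe that $(n-1)c+nH\lambda_i-\lambda_i^2$ is continuous with values in a finite set, hence constant. The paper leaves this unstated, and your trace formula makes that bookkeeping unnecessary.

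The paper's argument is shorter and phrased in terms of each principal curvature staying on one branch. Since $m_j$ is constant, constancy of $\sigma_j$ carries exactly the same information.
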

\begin{proof}
    For $1 \leq j \leq k-1$,  the two possible principal curvatures $\tilde\lambda_j^{\,\pm}$ given in \eqref{root}, are distinct on $M$ by Lemma~\ref{lem:degeneracy}. For each principal direction, switching between the two algebraic roots $\tilde\lambda_j^{\,\pm}$ from point to point is impossible, since the principal curvature functions are continuous on the connected hypersurface $M$. Thus, one must continuously choose the same branch of the quadratic solution for each principal direction, which determines a constant sign in the superscript of $\tilde\lambda_j^{\,\pm}$, and thus $\sigma_j \in \mathcal A_j$ is constant on $M$. Moreover, if there exist points $q \in M$ at which $n^2H^2(q)+4c_k=0$, then the corresponding two roots $\tilde\lambda_k^{\,\pm}(q)$ coincide, $\sigma_k(q)$ vanishes, and the local sign in the superscript of $\tilde\lambda_k^{\,\pm}$ may change when passing through such degeneracy points $q$, while $\sigma_k \in \mathcal A_k$ remains constant on each connected component where $n^2H^2+4c_k>0$ by continuity and Lemma~\ref{lem:degeneracy}. Hence, $\sigma_k \in \mathcal A_k \cup \{0\}$ on $M$.
\end{proof}

Denote by $\mathcal A:=(\prod_{j=1}^{k-1} \mathcal A_j)\times(\mathcal A_k \cup \{0\})$ the finite set of admissible vectors. Then by Lemma \ref{lem:delta}, the image of the piecewise constant vector-valued function $\sigma:=(\sigma_1,\cdots,\sigma_k)$ on $M$ lies in $\mathcal{A}$.
Let $D:=\{h\in\R \colon n^2h^2+4c_k>0\}$. It is easy to see that $D=\mathbb{R}$ if $c_k>0$, $D=\mathbb{R}\setminus \{0\}$ if $c_k=0$, and $D=\mathbb{R}\setminus [-\frac{2}{n}\sqrt{|c_k|}, \frac{2}{n}\sqrt{|c_k|}]$ if $c_k<0$. By Lemma~\ref{lem:degeneracy}, the mean curvature $H$ is a continuous function from $M$ to $\overline{D}$.
For every $\delta:=(\delta_1,\cdots,\delta_k)\in\mathcal A$, we define $F_\delta \colon \overline{D} \to \R$ as
\begin{equation}\label{eq:F}
    F_\delta(h):=n(n-2)h+\sum_{j=1}^k \delta_j\sqrt{n^2h^2+4c_j}.
\end{equation}

\begin{lemma}\label{lem:Image(H)}
    Let $M^n$ be an $n(\geq 3)$-dimensional connected hypersurface immersed in a real space form $\Q_c^{n+1}$ with constant Ricci eigenvalues. Then  $F_\sigma \circ H=0$ on $M$, i.e., $\oImag(H) \subset \bigcup_{\delta\in \oImag(\sigma)}F^{-1}_{\delta}(0)$.
\end{lemma}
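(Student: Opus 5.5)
The identity is simply the trace of the shape operator rewritten through the root formula \eqref{root}; we verify it pointwise. Fix $p\in M$ and choose an orthonormal principal frame $\{e_1,\dots,e_n\}$ at $p$. Group the indices $i$ according to the Ricci eigenvalue $\rho_j$ to which $e_i$ belongs. There are exactly $m_j$ such indices for each $j$. By \eqref{eq:Gauss}–\eqref{root}, each such $\lambda_i(p)$ equals $\tilde\lambda_j^{\,+}(p)$ or $\tilde\lambda_j^{\,-}(p)$.

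Let $a_j$ be the number of $+$ choices in group $j$ and $b_j=m_j-a_j$ the number of $-$ choices. Summing within the group gives
\[
\sum_{i\in I_j}\lambda_i(p)=\tfrac{m_j}{2}\,nH(p)+\tfrac{a_j-b_j}{2}\sqrt{n^2H(p)^2+4c_j}.
\]
When the discriminant vanishes, the square root term is zero. This is consistent with the convention $\sigma_k(p)=0$, and by Lemma~\ref{lem:degeneracy} it can only happen for $j=k$. In every case the group sum is therefore $\tfrac{m_j}{2}nH+\tfrac{\sigma_j(p)}{2}\sqrt{n^2H^2+4c_j}$.

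Now sum over $j$ and use $\sum_j m_j=n$ together with $\otr A=nH$:
\[
nH(p)=\tfrac{n^2}{2}H(p)+\tfrac12\sum_{j=1}^k\sigma_j(p)\sqrt{n^2H(p)^2+4c_j}.
\]
Multiplying by $2$ and rearranging gives $n(n-2)H(p)+\sum_j\sigma_j(p)\sqrt{n^2H(p)^2+4c_j}=0$, which is $F_{\sigma(p)}(H(p))=0$ by \eqref{eq:F}.

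It remains to check that this is legitimate. By Lemma~\ref{lem:degeneracy}, $H(p)\in\overline D$, and all the square roots are real there. By Lemma~\ref{lem:delta}, $\sigma(p)\in\mathcal A$, so $F_{\sigma(p)}$ is one of the admissible functions. Hence $H(p)\in F^{-1}_{\sigma(p)}(0)$ for every $p$, which gives the stated inclusion for $\oImag(H)$.

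The only point needing care is that $\sigma$ is evaluated pointwise, since $\sigma_k$ may jump at degenerate points. The statement $F_\sigma\circ H=0$ is meant pointwise, so this causes no difficulty. I expect no real obstacle here; the lemma is a bookkeeping identity.
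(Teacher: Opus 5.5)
Your proposal is correct and is essentially the paper's proof. Like the paper, you sum the root representations \eqref{root} within each Ricci eigenspace and use $\otr A=nH$ to obtain $n(n-2)H+\sum_j\sigma_j\sqrt{n^2H^2+4c_j}=0$ pointwise. Your remarks on degenerate points and the pointwise evaluation of $\sigma$ just make explicit what the paper leaves implicit.
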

\begin{proof}
   Summing all admissible principal curvatures in representation \eqref{root} and grouping together those associated with the same Ricci eigenvalue, one obtains
    $$nH=\frac{n^2}{2}H+\frac{1}{2}\sum_{j=1}^{k}\sigma_j \sqrt{n^2H^2+4c_j},$$
    which is equivalent to
    \begin{equation}\label{eq:H}
        n(n-2)H+\sum_{j=1}^{k}\sigma_j \sqrt{n^2H^2+4c_j}=0.
    \end{equation}
     The proof is completed by combining the expressions~\eqref{eq:F} and \eqref{eq:H}.
\end{proof}

Hence, we reduce the classification problem to the investigation on the mean curvature function, whose image is contained in the zero set of an explicitly constructed function $F_\delta$. If $I$ is a connected component of $D$, then $F_\delta$ is a real-analytic function on $I$. It follows from the identity theorem for real-analytic functions that its zero set is either discrete or the whole component. The structure of the zero set is analyzed in more detail in the next section (Propositions~\ref{prop:F=0} and \ref{prop:F!0}).


\section{Proof of the Main Results}\label{sec3}
In this section, we aim to prove the classification of hypersurfaces with constant Ricci eigenvalues in real space forms (Theorem~\ref{thm:main}) and characterize when such hypersurfaces are isoparametric (Theorem~\ref{thm:cmc} and Corollaries \ref{cor:Ric-isopara}-\ref{cor:H-isopara}). Motivated by Lemma~\ref{lem:Image(H)}, we first investigate the zero set of $F_\delta$ and establish the following two propositions.

\begin{proposition}\label{prop:F=0}
     Let $I_+\subset (0, +\infty)~(\text{resp. } I_-\subset (-\infty,0))$ be an unbounded connected component of $D$ and $n\ge 3$. For  $\delta\in\mathcal A$, 
     $F_\delta\equiv 0$ on $I_+~(\text{resp. } I_-)$ if and only if there exists a unique index $j_0\in\{1,\cdots,k\}$ such that
     $$c_{j_0}=0,\quad \delta_{j_0}=2-n~(\text{resp. }\delta_{j_0}=n-2),\quad \delta_j=0 \text{ for } j\neq j_0.$$
\end{proposition}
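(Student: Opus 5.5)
The sufficiency direction is immediate: if $c_{j_0}=0$, $\delta_{j_0}=2-n$ and all other $\delta_j=0$, then on $I_+$ we have $\sqrt{n^2h^2}=nh$, so
$$F_\delta(h)=n(n-2)h+(2-n)nh=0.$$
On $I_-$ we have $\sqrt{n^2h^2}=-nh$, and $\delta_{j_0}=n-2$ again gives zero. The substance is the necessity direction. I will use linear independence of the functions $h$ and $\sqrt{n^2h^2+4c_j}$ for distinct constants $c_j$ on an unbounded interval.

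First I rewrite $F_\delta\equiv0$ on $I_+$ as
$$n(n-2)h+\sum_{j=1}^k\delta_j\sqrt{n^2h^2+4c_j}=0.$$
The $c_j$ are pairwise distinct, since $c_1>\cdots>c_k$. Extend each $\sqrt{n^2h^2+4c_j}$ holomorphically to a neighborhood of $I_+$ in the upper half plane, or simply differentiate twice. A convenient route is to compute
$$\frac{d^2}{dh^2}\sqrt{n^2h^2+4c_j}=\frac{4n^2c_j}{(n^2h^2+4c_j)^{3/2}}.$$
The second derivative of $F_\delta$ is then
$$\sum_j \delta_j\cdot 4n^2 c_j\,(n^2h^2+4c_j)^{-3/2}\equiv0\quad\text{on } I_+.$$
Because the identity holds on an open interval and the functions are real-analytic, it extends by the identity theorem to all $h$ with $n^2h^2>-4\min_j c_j$, hence to large real $h$. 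It also extends analytically in $t=h^2$ to the half-line $t>\max_j(-4c_j/n^2)$.

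The key step is showing that the functions $(n^2t+4c_j)^{-3/2}$, for distinct $c_j$, are linearly independent. As functions of $t$, each has a branch singularity exactly at $t=-4c_j/n^2$. Continue analytically along the real axis toward the largest singular point $t_*=-4c_{k}/n^2$ and beyond, and look at the term with the most negative $c_j$: it blows up there while the others stay analytic. This forces $\delta_jc_j=0$ for that $j$; inducting downward gives $\delta_jc_j=0$ for all $j$.

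So every $j$ with $c_j\neq0$ has $\delta_j=0$. At most one index has $c_{j_0}=0$, by distinctness. The identity then reduces to $n(n-2)h+\delta_{j_0}n|h|=0$ on $I_+$, which gives $\delta_{j_0}=2-n$; on $I_-$ it gives $\delta_{j_0}=n-2$. If no index had $c_j=0$, we would get $n(n-2)h\equiv0$, which is impossible because $n\ge3$. This is where $n\geq 3$ is used, and uniqueness of $j_0$ follows. The main obstacle is making the singularity-separation argument rigorous. An alternative I would try first is asymptotic expansion as $h\to+\infty$:
$$\sqrt{n^2h^2+4c_j}=nh\sum_{m\ge0}\binom{1/2}{m}\Bigl(\frac{4c_j}{n^2h^2}\Bigr)^m.$$
Matching coefficients yields $\sum_j\delta_jc_j^m=0$ for all $m\ge1$, together with $n-2+\sum_j\delta_j=0$. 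A Vandermonde argument on the distinct nonzero $c_j$ then gives $\delta_j=0$ whenever $c_j\ne0$, which avoids complex continuation entirely.
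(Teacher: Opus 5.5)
Your proof is correct. The fallback you sketch at the end is exactly the paper's proof: expand at $h=+\infty$, read off $n-2+\sum_j\delta_j=0$ and $\sum_j\delta_jc_j^m=0$ for $m\ge1$, then apply Vandermonde to the distinct nonzero $c_j$.

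Your primary route is genuinely different. Differentiating twice removes both the linear term $n(n-2)h$ and any term with $c_j=0$. You then get $\delta_jc_j=0$ by separating the branch points of $G(t)=\sum_j\delta_jc_j(n^2t+4c_j)^{-3/2}$, instead of using a determinant.

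You call this the main obstacle, but the argument is already rigorous and needs no complex continuation. Since $I_+=(h_0,\infty)$ with $n^2h_0^2=-4c_k$ (so $I_+$ exists only when $c_k\le0$), $G$ vanishes on all of $(t_*,\infty)$, where $t_*=-4c_k/n^2$. As $t\downarrow t_*$, every term except the $k$-th stays bounded, which forces $\delta_kc_k=0$. The reduced sum is real-analytic on the longer half-line $(-4c_{k-1}/n^2,\infty)$ and vanishes there by the identity theorem, and you repeat. Working in $t$ rather than $h$ is essential here: for $c_j>0$ the branch point $-4c_j/n^2$ is negative and corresponds to no real $h$.

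The two routes buy different things. The paper stays at $h=+\infty$. It therefore needs convergence of the binomial series and $\binom{1/2}{m}\neq0$, but its leading coefficient settles the $c_{j_0}=0$ index and $\delta_{j_0}=2-n$ in the same stroke. Your version works at finite singular points and needs no series bookkeeping. It would apply verbatim to any non-integer power $(t-a_j)^{\alpha}$. But because differentiation discards the linear part, you must return to $F_\delta$ itself to pin down $\delta_{j_0}$ and to use $n\ge3$, which you correctly do.
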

\begin{proof}
We prove the positive case first; the proof of the negative case is analogous.

\medskip
\emph{Necessity.}
Assume that $F_\delta\equiv 0$ on a positive unbounded connected component $I_+$. As $h\to+\infty$ within $I_+$, for each $j$, one has 
$$\sqrt{n^2h^2+4c_j}=nh\sqrt{1+\frac{4c_j}{n^2h^2}}=
\sum_{m=0}^\infty \binom{1/2}{m}(4c_j)^m (nh)^{-2m+1},$$
where $$\binom{1/2}{m}=\frac{\frac{1}{2}(\frac{1}{2}-1) \cdots (\frac{1}{2}-m+1)}{m!}=\frac{(-1)^{m-1}(2m-3)!!}{2^m m!}.$$
Substituting this into $F_\delta(h)\equiv 0$ yields
$$F_\delta(h)=
\Bigl((n-2)+\sum_{j=1}^k\delta_j\Bigr)nh+\sum_{m=1}^\infty  (-1)^{m-1}2^m\frac{(2m-3)!!}{m!}\sum_{j=1}^k \delta_j c_j^m (nh)^{-2m+1} \equiv 0,
$$
and comparing coefficients of the powers of $nh$ gives
\begin{equation}\label{coefficients}
     \sum_{j=1}^k \delta_j=-(n-2),
     \quad
     \sum_{j=1}^k \delta_j c_j^m=0 \text{ for all } m\ge 1.
\end{equation}

Since the numbers $c_1,\cdots,c_k$ are pairwise distinct, each value among $\{c_j\}_{j=1}^{k}$ occurs exactly once. Hence, restricting the second family of relations in \eqref{coefficients} to the nonzero $c_j$, we obtain a homogeneous linear system with a Vandermonde matrix, which is invertible. Therefore,
$\delta_j=0 \text{ whenever } c_j\neq 0.$
It remains to analyze the contribution of the zero value among $\{c_j\}_{j=1}^{k}$. The first relation in \eqref{coefficients} forces the existence of a unique index $j_0$ with $c_{j_0}=0$, and then $\delta_{j_0}=2-n.$




\medskip
\emph{Sufficiency.}
Conversely, assume that there exists $j_0 \in\{1,\cdots,k\}$ such that $c_{j_0}=0$, $\delta_{j_0}=2-n$ and $\delta_j=0$ for $j\neq j_0$.
For arbitrary $h\in I_+$, we have $\sqrt{n^2h^2}=nh$. It follows that
$$F_\delta(h)=n(n-2)h+\delta_{j_0}\sqrt{n^2h^2}
=n(n-2)h+(2-n)nh=0$$
for all $h\in I_+$, so that $F_\delta\equiv 0$ on $I_+$.

\medskip
Similarly, we prove the negative case. For the \emph{necessity}, using instead the expansion
$$\sqrt{n^2h^2+4c_j}
=-\sum_{m=0}^\infty \binom{1/2}{m}(4c_j)^m (nh)^{-2m+1},
\quad \text{as } h\to-\infty,$$
which leads to
$$F_\delta(h)=
\Bigl((n-2)-\sum_{j=1}^k\delta_j\Bigr)nh-\sum_{m=1}^\infty  (-1)^{m-1}2^m\frac{(2m-3)!!}{m!}\sum_{j=1}^k \delta_j c_j^m (nh)^{-2m+1} \equiv 0,
$$
and thus
\begin{equation*}\label{coefficients-}
     \sum_{j=1}^k \delta_j=n-2,
     \quad
     \sum_{j=1}^k \delta_j c_j^m=0 \text{ for all } m\ge 1.
\end{equation*}
The same Vandermonde matrix as above shows that $\delta_j=0$ whenever $c_j\neq 0$, and hence there exists a unique index $j_0$ such that $c_{j_0}=0,~\delta_{j_0}=n-2$. For the \emph{sufficiency}, we have $\sqrt{n^2h^2}=-nh$ for $h\in I_-$, and hence 
$$F_\delta(h)=n(n-2)h+\delta_{j_0}\sqrt{n^2h^2}
=n(n-2)h-(n-2)nh=0$$
for all $h\in I_-$, so that $F_\delta\equiv 0$ on $I_-$. The proof is now complete.
\end{proof}

\begin{remark}
    If $D$ consists of exactly one connected component $I$, i.e., $c_k>0$ and $I=D=\R$, then $F_\delta \equiv 0$ on $I$ is impossible by Proposition~\ref{prop:F=0}. 
\end{remark}


\begin{proposition}\label{prop:F!0}
     Let $I$ be a connected component of $D$. If $F_\delta\not\equiv0$ on $I$ for some $\delta \in \mathcal{A}$, then $F^{-1}_{\delta}(0)\cap I$ is discrete. Consequently, for a hypersurface $M^n$ with constant Ricci eigenvalues in a real space form $\Q_c^{n+1}$, on every connected component $U$ of $M$ where $n^2H^2+4c_k>0$ (i.e., $\oImag(H|_U)\subset D$), either $F_{\sigma}(h)\equiv0$ on the connected component in $D$ of $h\in \oImag(H|_U)\subset D$ or the mean curvature $H$ is constant.
\end{proposition}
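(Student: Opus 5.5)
Two things need proving: that the zero set of $F_\delta$ on $I$ is discrete, and then the dichotomy for the mean curvature on each component $U$.

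\emph{Discreteness.} On a connected component $I$ of $D$ we have $n^2h^2+4c_j>0$ for every $j$. Indeed, $c_j\ge c_k$ gives
$$n^2h^2+4c_j\ge n^2h^2+4c_k>0 \quad\text{on } D.$$
So every square root $\sqrt{n^2h^2+4c_j}$ is the composition of the analytic map $t\mapsto\sqrt t$ on $(0,\infty)$ with a polynomial, and hence is real-analytic on the open interval $I$. Therefore $F_\delta$ is real-analytic on $I$. If $F_\delta\not\equiv0$ on $I$, the identity theorem says its zeros cannot accumulate at any point of $I$. Otherwise all derivatives of $F_\delta$ would vanish at the accumulation point, and $F_\delta$ would vanish identically on the connected set $I$. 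Hence $F_\delta^{-1}(0)\cap I$ is discrete in $I$; in fact it is at most countable, since $I$ is $\sigma$-compact.

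\emph{The dichotomy.} Let $U$ be a connected component of the open set $\{n^2H^2+4c_k>0\}\subset M$. The set $U$ is connected and $H$ is continuous, so $H(U)$ is a connected subset of $D$. It therefore lies in a single component $I$ of $D$ and is an interval (possibly a point). By Lemma~\ref{lem:delta}, $\sigma_j$ is constant on $M$ for $j<k$, and $\sigma_k$ is constant on $U$. Hence $\sigma|_U\equiv\delta$ for one fixed $\delta\in\mathcal A$. By Lemma~\ref{lem:Image(H)}, $F_\delta(H(p))=0$ for all $p\in U$, that is, $H(U)\subset F_\delta^{-1}(0)\cap I$. If $F_\delta\equiv0$ on $I$ we are in the first alternative. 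If not, $H(U)$ is a connected subset of a discrete subset of $\R$, hence a single point, so $H$ is constant on $U$.

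The only real subtlety is the bookkeeping. One has to make sure $\sigma$ is genuinely constant on $U$, which needs $U$ to avoid the degeneracy locus; this is exactly the component-wise statement of Lemma~\ref{lem:delta}. One also has to use the open component $I$ rather than $\overline D$, since analyticity fails at the endpoints where $n^2h^2+4c_k=0$. With these points in place, the proof is a direct combination of analyticity with the connectedness of $H(U)$.
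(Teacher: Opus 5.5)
Your proposal is correct and follows essentially the same route as the paper. Discreteness comes from real-analyticity of $F_\delta$ on the open component $I$ together with the identity theorem. The dichotomy then follows from the constancy of $\sigma$ on $U$ (Lemma~\ref{lem:delta}), Lemma~\ref{lem:Image(H)}, and the connectedness of $H(U)$. Your extra remarks, that $n^2h^2+4c_j\ge n^2h^2+4c_k>0$ on $D$ for every $j$ and that one must work on the open component rather than on $\overline{D}$, simply make explicit what the paper leaves implicit.
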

\begin{proof}
     By the expression \eqref{eq:F}, $F_\delta$ is an analytic function of $h$ on any connected component $I$ where the square roots are fixed as real-analytic branches. Hence, its zero set $F^{-1}_{\delta}(0)\cap I$ is discrete by the identity theorem for real-analytic functions under the assumption that $F_\delta\not\equiv 0$ on $I$.
     
      On every connected component $U$ of $M$ where $n^2H^2+4c_k>0$, the admissible vector function $\sigma\in\mathcal{A}$ is constant, and $\oImag(H|_U)\subset D$ is located in exactly one connected component of $D$ since the mean curvature $H$ is a continuous function on $M$. Combined with Lemma~\ref{lem:Image(H)}, we conclude that $\oImag(H|_U) \subset F^{-1}_{\sigma}(0)$ is either a single point (thus $H$ is constant), or an interval and $F_{\sigma}(h)\equiv0$ on its connected component in $D$. 
\end{proof}

Based on the \emph{a priori} possible root values \eqref{root} for principal curvatures, we derive a criterion (Theorem~\ref{thm:cmc}) that determines which hypersurfaces with constant Ricci eigenvalues are isoparametric. This criterion will be used later in the proof of Theorem~\ref{thm:main}.

\begin{proof}[Proof of Theorem~\ref{thm:cmc}]
    The \emph{necessity} is obvious from the definition. We now prove the \emph{sufficiency}.
    Suppose that the mean curvature $H$ of $M^n$ is constant. At every point of $M$, by \eqref{root}, the principal curvatures must belong to the finite set
    $$\mathcal{S}:=\left\{\frac{nH\pm\sqrt{n^2H^2+4c_j}}{2}:\ j=1,\cdots,k\right\}.$$
    Since every principal curvature function $\lambda_i \colon M \to \R$ is continuous on connected $M$, $\oImag(\lambda_i)$ is also connected and therefore degenerates to a single point in $\mathcal{S}$. Thus, all principal curvatures are constant on $M$, which means that $M$ is isoparametric.
\end{proof}

Applying Lemma~\ref{lem:Image(H)}, Propositions \ref{prop:F=0}, \ref{prop:F!0}, together with Theorem~\ref{thm:cmc}, we now characterize hypersurfaces in real space forms with constant Ricci eigenvalues.

\begin{proof}[Proof of Theorem~\ref{thm:main}]
    \emph{Necessity.} 
    As curvature homogeneity is a local property, we need only prove it locally.
    If $H$ is constant, then $M$ is already isoparametric by Theorem~\ref{thm:cmc}, and hence curvature homogeneous by Tsukada's characterization. From now on, assume that $H$ is not constant, thus $n^2H^2+4c_k\not\equiv0$ and $\oImag(H)$ is an interval. By Lemma \ref{lem:degeneracy}, we have two cases: either $n^2H^2+4c_k>0$ everywhere, or the subset of $M$ where $n^2H^2+4c_k>0$ is a nonempty open submanifold of $M$ with countably many connected components $U_\alpha$ with $n^2H^2+4c_k=0$ on $M\setminus \bigcup_\alpha U_\alpha$. 
    
    For the first case, by Lemma \ref{lem:delta} $\sigma\in\mathcal{A}$ is a constant vector and by Proposition \ref{prop:F!0}, $F_{\sigma}(h)\equiv0$ on the connected component in $D$ of $h\in \oImag(H)\subset D$. 
        By Proposition~\ref{prop:F=0}, there exists a unique index $j_0$ such that $c_{j_0}=0, ~|\sigma_{j_0}|=n-2, \text{ and }  \sigma_j=0 \text{ for } j\neq j_0.$
    Thus $\rho_{j_0}=(n-1)c$. Since $\sigma_{j_0}\in \mathcal{A}_{j_0} \cup \{0\}$, the multiplicity of $\rho_{j_0}$ must satisfy $n-2 \leq m_{j_0} \leq n$ and $m_{j_0} \equiv n-2 \pmod 2$. It follows that $m_{j_0} \in \{n-2,n\}$.

    If $m_{j_0}=n$, then every principal curvature is a root of $x^2-nHx=0$, hence is either $0$ or $nH$. Since the sum of all principal curvatures equals $nH$, exactly one of them is $nH$ and the remaining $(n-1)$ are $0$. Therefore, the spectrum of the shape operator of $M$ is
    $$\oSpec(A)=\{nH,0,\cdots,0\}.$$ 
    By the Gauss equation, $M$ has constant sectional curvature $c$ and hence $M$ is curvature homogeneous by Tsukada's characterization. 

    If $m_{j_0}=n-2$, then the sum of the remaining multiplicities is $2$. Since $\sigma_j=0$ for all $j\neq j_0$, each $m_j$ with $j\neq j_0$ must be even; hence there is exactly one index $j_1\neq j_0$ with $m_{j_1}=2$. Thus $k=2$ and exactly one Ricci eigenvalue $\rho_{j_0}=(n-1)c$ with multiplicity $n-2$. Let $\mu,\nu$ be the two principal curvatures corresponding to $\rho_{j_1}$. Then, by \eqref{eq:quadratic}, we have
    $$\mu+\nu=nH, \quad \mu\nu=-c_{j_1}=\rho_{j_1}-(n-1)c\neq0.$$
    The remaining $(n-2)$ principal curvatures solve $x^2-nHx=0$, so each is either $0$ or $nH$. Since $\mu+\nu=nH$, the sum of those $(n-2)$ principal curvatures is $0$, and therefore all of them vanish. Hence, 
    $$\oSpec(A)=\{\mu,\nu,0,\cdots,0\}.$$
    Since $\mu\nu \neq 0$, $M$ is a rank-two hypersurface with constant scalar curvature, and hence $M$ is curvature homogeneous by Tsukada's characterization. 

    For the second case, on each connected component $U_\alpha$ where $n^2H^2+4c_k>0$, by Proposition \ref{prop:F!0}, we have that either $H$ is constant or $F_{\sigma}(h)\equiv0$ on the connected component in $D$ of $h\in \oImag(H|_{U_\alpha})\subset D$, which implies that $U_\alpha$ is one of the same three kinds of curvature homogeneous hypersurfaces as above: isoparametric; constant sectional curvature $c$; rank-two with constant scalar curvature. On the interior $\Omega$ (if it exists) of the subset $M\setminus \bigcup_\alpha U_\alpha$ where $n^2H^2+4c_k=0$, $H$ is constant and thus it is isoparametric by Theorem~\ref{thm:cmc}. For points on the boundary of $U_\alpha$, the required isometry $J_{pq}$ can be obtained from that of $U_\alpha$ or $\Omega$ since the curvature homogeneity (i.e., the three kinds of properties) is continuous and closed under taking limits. In conclusion, $M$ is again curvature homogeneous.

    \medskip
    \emph{Sufficiency.} This is obvious by definition, so we give a brief proof here.
     Let $M$ be a curvature homogeneous manifold. For any two points $p,q\in M$, there exists a linear isometry $J_{pq} \colon T_pM\to T_qM$ such that $$R_q\bigl(J_{pq}X,J_{pq}Y,J_{pq}Z,J_{pq}W\bigr)=R_p(X,Y,Z,W)$$ for all $X,Y,Z,W\in T_pM$. Since the Ricci tensor is obtained from the curvature tensor by metric contraction, one has $\oRic(X,Y)=\sum_{i=1}^n R(e_i,X,Y,e_i)$ for any orthonormal basis $\{e_i\}_{i=1}^n$. It follows that $$\oRic_q\bigl(J_{pq}X,J_{pq}Y\bigr)=\oRic_p(X,Y)$$ for all $X,Y\in T_pM$.
    Equivalently, if $\widehat{\oRic}_p$ and $\widehat{\oRic}_q$ denote the Ricci endomorphisms at $p$ and $q$, respectively, then $$\widehat{\oRic}_q\circ J_{pq}=J_{pq}\circ \widehat{\oRic}_p.$$
    Hence $\widehat{\oRic}_p$ and $\widehat{\oRic}_q$ are orthogonally conjugate, and therefore have the same characteristic polynomial. In particular, their eigenvalues, counted with multiplicities, coincide. Since $p$ and $q$ are arbitrary, the spectrum of the Ricci endomorphism is independent of the point. Therefore, the Ricci eigenvalues are constant on $M$.
\end{proof}

At last, we prove the sufficient conditions (Corollaries \ref{cor:Ric-isopara}--\ref{cor:H-isopara}) for a hypersurface with constant Ricci eigenvalues to be isoparametric.
It suffices to exclude the two kinds of curvature homogeneity (constant sectional curvature $c$; rank-two with constant scalar curvature) from the isoparametric kind.

\begin{proof}[Proof of Corollary~\ref{cor:Ric-isopara}]
      By Theorems \ref{thm:main} and \ref{thm:cmc}, the characterization of isoparametric hypersurfaces is reduced to proving the constancy of the mean curvature $H$, and the main issue is to exclude the possibility that $F_\sigma$ vanishes identically on a connected component of $D$ (i.e., the rank of the shape operator of $M$ is at most two). For the excluded cases, by the proof of Theorem~\ref{thm:main}, $M$ must have a Ricci eigenvalue $(n-1)c$ with multiplicity $n$ or $(n-2)$. The corresponding number of distinct Ricci eigenvalues is $1$ or $2$, respectively. Hence, each one of the three sufficient conditions stated in the corollary excludes these cases and thus implies the constancy of the mean curvature $H$. This completes the proof.
\end{proof}

\begin{proof}[Proof of Corollary~\ref{cor:S-isopara}]
       As Ryan \cite{Ryan1969} showed, the complete Einstein hypersurfaces in spheres are the small spheres, the great spheres and certain products of spheres, which are isoparametric hypersurfaces with $1$ or $2$ distinct principal curvatures. Hence, the kind of constant sectional curvature $1$ is automatically isoparametric as it is Einstein.
       For the kind of rank-two with constant scalar curvature, the conclusion follows from Theorem \ref{thmS} for $n\geq4$, and from Theorem \ref{thm:BFZhyper} for $n=3$ where $f_1$ is the only complete example and it is exactly the Cartan isoparametric hypersurface with $3$ distinct principal curvatures. 
\end{proof}

\begin{proof}[Proof of Corollary~\ref{cor:R-isopara}]
      As Ryan \cite{Ryan1969} showed, the complete Einstein hypersurfaces in Euclidean spaces are spheres (thus isoparametric), hyperplanes and cylinders over complete plane curves which are flat (thus excluded by the nonflat assumption). For the kind of rank-two with constant scalar curvature, the conclusion follows from the third case in Theorem \ref{thmR}. Indeed, if $M^n$ is complete, the local product structure lifts to the universal cover and becomes a global product. Hence the two-dimensional factor $M^2_{\kappa}$ is complete and, again by Ryan's classification of complete Einstein hypersurfaces in Euclidean spaces, it must be a sphere. Therefore, the third case reduces to the spherical cylinders $\Sp^2(\frac{1}{\sqrt{\kappa}})\times \R^{n-2}$, which are isoparametric hypersurfaces with $2$ distinct principal curvatures.
      
\end{proof}

\begin{proof}[Proof of Corollary~\ref{cor:H-isopara}]
      For $n\geq5$, by Theorem \ref{thmH}, the curvature homogeneous hypersurfaces in hyperbolic spaces are either of constant sectional curvature $-1$, or totally umbilical (which is isoparametric with $1$ principal curvature), or locally congruent to $\Sp^r_{c_1}\times \Hi^{n-r}_{c_2}$ (which are isoparametric hypersurfaces with $2$ distinct principal curvatures).
\end{proof}

	\bibliographystyle{plain}

\end{document}